\documentclass[lettersize,journal]{IEEEtran}
\usepackage{amsmath,amsfonts}
\usepackage{algorithmic}
\usepackage{algorithm}
\usepackage{array}
\usepackage[caption=false,font=normalsize,labelfont=sf,textfont=sf]{subfig}
\usepackage{textcomp}
\usepackage{stfloats}
\usepackage{url}
\usepackage{verbatim}
\usepackage{graphicx}
\usepackage{cite}
\usepackage{booktabs}
\usepackage{ellipsis}
\usepackage{mathtools}
\usepackage[european]{circuitikz}
\usepackage{tuda-pgfplots}

\usepackage{nicematrix}
\usepackage{blkarray}
\usepackage{amssymb}
\usepackage{amsthm}
\newtheorem{definition}{Definition}
\newtheorem{assumption}{Assumption}
\newtheorem{proposition}{Proposition}
\newtheorem{lemma}{Lemma}
\newtheorem{theorem}{Theorem}
\newtheorem{corollary}{Corollary}

\newtheorem{example}{Example}

\usepackage[hidelinks]{hyperref}
\usepackage[nameinlink, capitalise]{cleveref}
\crefname{equation}{}{}
\crefname{assumption}{Assumption}{Assumptions}
\crefname{figure}{Fig.}{Fig.}
\crefname{proof}{Proof}{Proofs}

\usepackage{accents}

\usepackage{notation}

\usetikzlibrary{decorations}
\usetikzlibrary{external}
\begin{document}
\title{A topological decoupling of modified nodal analysis\\ including controlled sources}

\author{Idoia Cortes Garcia, Peter F. Förster, Lennart Jansen, Wil Schilders, and Sebastian Schöps
}



\maketitle

\begin{abstract}
    We derive a topological decoupling of the equations of modified nodal analysis (MNA) to a semi-explicit index one differential-algebraic equation. The decoupling explicitly allows for controlled sources, which play a crucial role in engineering design workflows. Furthermore, the proof is constructive and provides a graph-based algorithmic framework for the computation of the decoupling, enabling its application to a variety of industry problems. These include the generation of consistent initial conditions, model order reduction, (scientific) machine learning, as well as speeding up conventional circuit simulation. In addition, the decoupling preserves the structure of MNA, i.e.\ the resulting systems remain sparse and key parts remain positive definite. We illustrate the decoupling using multiple examples, including some of the most common subcircuits containing controlled sources. Lastly, we also provide a first software implementation of the decoupling.
\end{abstract}

\begin{IEEEkeywords}
    Circuit simulation, Network theory (graphs),
    Modified nodal analysis, Differential-algebraic equations
\end{IEEEkeywords}

\section{Introduction}
\label{sec:int}
\IEEEPARstart{M}{odified} nodal analysis (MNA) \cite{ho1975} is one of the most common circuit formulations and forms the basis for SPICE-like circuit simulators such as LTspice \cite{ltspice}, Xyce \cite{xyce} and PSpice \cite{pspice}. Due to its popularity, MNA has been extensively studied not only from an engineering, but also from a mathematical perspective. In this context, it is important to note that MNA can be abstractly characterized as a differential-algebraic equation (DAE)
\begin{align*}
    \mb{f}(\mb{x}', \mb{x}, t) = \mb{0},
\end{align*}
where the Jacobian of $\mb{f}$ w.r.t.\ $\mb{x}'$ is singular. DAEs are inherently different from the more commonly encountered ordinary differential equations and can lead to significant numerical problems \cite{petzold1982}. As a consequence, so-called index concepts were developed to better understand and classify their behavior \cite{mehrmann2015}. Regarding MNA, one of the most well-known index results is due to Tischendorf \cite{tischendorf1999}. It provides a topological characterization of the tractability and differentiation indices of MNA, when only considering independent, but not controlled, sources. The extension to a topological result that also accounts for controlled sources was provided shortly after by Est{\'e}vez Schwarz and Tischendorf \cite{estevez2000}. For many purposes this result marks the state of the art, as it allows to verify whether a given circuit can be simulated using standard algorithms such as TR-BDF2 \cite{hosea1996}.

More recently, Jansen \cite{jansen2014} introduced the dissection index, which not only provides an equivalent topological index result for MNA, but also a topological decoupling of the system of MNA to a semi-explicit index one DAE\footnote{See \cref{def:seio} for a rigorous definition.}. Such a decoupling enables many applications, as it e.g.\ allows for efficient model order reduction or machine learning approaches along the lines of \cite{ali2014} and \cite{forster2023c}, while also facilitating the computation of consistent initial values fulfilling the hidden constraints of the DAE, compare \cite{estevez2021}. Moreover, it may provide an avenue towards speeding up conventional simulations, by leveraging the semi-explicit structure using appropriate time integrators, see e.g.\ \cite[Chapter 7]{jansen2014}.

A major obstacle regarding industry-level applications has so far been the issue that the topological decoupling is only available for circuits without controlled sources, as the latter are crucial for modeling common circuit elements such as transistors, switches or operational amplifiers. We now remove this obstacle, by generalizing the result from \cite{jansen2014} to one allowing for controlled sources underlying almost identical conditions as those introduced in \cite{estevez2000}. Furthermore, we also provide a first software implementation of the decoupling \cite{forster2026c}.


The structure of the paper is as follows. \Cref{sec:pre} collects prerequisites for the decoupling presented in \cref{sec:dec}, while \cref{sec:ex} provides a number of examples that include some of the most common controlled circuit elements. Lastly, \cref{sec:con} draws conclusions.

\section{Prerequisites}
\label{sec:pre}
We begin by listing a few definitions and assumptions that are required for the decoupling. First, we provide a rigorous definition for semi-explicit index one DAEs, see also \cite[Section 2.2]{brenan1995}, as this marks the goal of the decoupling.

\begin{definition}[Semi-explicit index one DAE]
    \label{def:seio}
    A DAE of the form
    \begin{subequations}
        \label{eq:seio}
        \begin{align}
            \dd{t} \mb{x} &= \mb{f}(\mb{x}, \mb{y}, t) \label{eq:seio.1}\\
            \mb{0} &= \mb{g}(\mb{x}, \mb{y}, t) \label{eq:seio.2}
        \end{align}
    \end{subequations}
    is called semi-explicit index one DAE if the Jacobian of $\mb{g}$ w.r.t.\ $\mb{y}$ is regular. (This implies that $\mb{g}$ is uniquely solvable for $\mb{y}$ by the implicit function theorem.) We further refer to $\mb{x}$ as the \emph{differential} variables and $\mb{y}$ as the \emph{algebraic} variables.
\end{definition}

Next, we recall the basic definition of an incidence matrix, as it represents the fundamental building block of MNA.

\begin{definition}[Incidence matrix]
    \label{def:im}
    The entries $a_{i,j}$ of an unreduced incidence matrix $\A \in \mathbb{R}^{n \times b}$, describing a circuit with $n$ nodes and $b$ branches, are defined as
    \begin{align*}
        a_{i,j} \coloneqq \begin{cases}
            +1, &\text{if branch $j$ leaves node $i$,}\\
            -1, &\text{if branch $j$ enters node $i$,}\\
            0, &\text{if branch $j$ is not incident to node $i$.}
        \end{cases}
    \end{align*}
    A reduced incidence matrix $\A'$ is obtained by eliminating one row of $\A$ (this corresponds to choosing a ground node). In the following, we mostly work with reduced incidence matrices, thus these should be understood as the default. We therefore write $\A$ also for reduced incidence matrices and explicitly remark when dealing with unreduced ones.
\end{definition}

The following definition introduces the key concept used for the decoupling, compare also \cite[Definition 4.5]{jansen2014}.

\begin{definition}[Basis matrices]
    \label{def:bf}
    Let $\AT \in \mathbb{R}^{b \times n}$, then we define four basis matrices\footnote{Note that the original definition is given in terms of basis functions, as it allows for the matrix entries to depend on the solution \cite[Definition 4.5]{jansen2014}. We only deal with constant basis functions however, hence the name change.} $\mb{P}, \Q, \V, \W$ such that
    \begin{align*}
        \im \Q = \ker \AT, \quad \im \W = \ker \A
    \end{align*}
    and the columns of $[\mb{P}, \Q]$ form a basis of $\mathbb{R}^n$, while the columns of $[\V, \W]$ form a basis of $\mathbb{R}^b$.
\end{definition}

We can now present the assumptions needed for the decoupling, beginning with the standard assumptions required for the problem to be well-posed, compare also \cite[Theorem 2.2]{estevez2000}.

\begin{assumption}[Well-posedness assumptions]
    \label{as:wp}
    ~\begin{enumerate}
        \item The circuit is connected, not shorted and contains no self-loops.
        \item The circuit contains no $\mr{V}$-loops made up of only voltage sources.
        \item The circuit contains no $\mr{I}$-cutsets made up of only current sources.
    \end{enumerate}
\end{assumption}

The latter two of these topological assumptions can be translated into the following algebraic ones, compare again \cite[Theorem 2.2]{estevez2000}. (Subscripts indicate which element types the incidence matrices belong to, e.g.\ $\mr{V}$ signifies voltage sources.)

\begin{proposition}
    \label{prop:wp}
    \Cref{as:wp} implies that
    \begin{enumerate}
        \item[2)] $\A[V]$ has full column rank and that
        \item[3)] $[\A[C], \A[L], \A[R], \A[V]]$ has full row rank.
    \end{enumerate}
\end{proposition}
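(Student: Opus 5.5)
We will prove both claims with the standard graph-theoretic characterisation of incidence matrices: for a connected graph with a chosen ground node, a set of branches has linearly independent columns in the reduced incidence matrix if and only if it contains no loop, and it spans $\mathbb{R}^{n}$ (the number of non-ground nodes) if and only if it contains a spanning tree. We first recall or briefly justify this fact. The columns of the unreduced incidence matrix sum to zero along any oriented loop, so loops give linear dependencies. Conversely, a forest can be ordered so that its columns form a triangular system after deleting the ground row: repeatedly pick a leaf that is not the ground node. Finally, $\ker \A^\top$ consists of node potentials that are equal across every branch. Hence, if the branches in question connect every node to ground, the only such potential is zero, and the reduced matrix has full row rank.

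For 2), suppose $\A[V]\mb{z} = \mb{0}$ with $\mb{z} \neq \mb{0}$. We pass to the unreduced matrix by appending the ground row, which is minus the sum of the other rows, so the relation persists. The branches in the support of $\mb{z}$ then form a subgraph in which every node is incident to either none or at least two of these branches, with weighted in-flow equal to out-flow. Any such nonempty subgraph contains a cycle: take the subgraph on the support and follow edges without reusing one until a node repeats. Self-loops are excluded by \cref{as:wp}, item 1, so this cycle is a genuine $\mr{V}$-loop, contradicting item 2. Equivalently, one can invoke the forest characterisation above.

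For 3), we argue by contraposition on the row rank. If $\mb{y} \in \ker [\A[C], \A[L], \A[R], \A[V]]^\top$ with $\mb{y} \neq \mb{0}$, then, setting the ground potential to zero, $\mb{y}$ is a potential that is constant across every C, L, R and V branch. We partition the nodes into level sets of $\mb{y}$. The set $S$ of nodes where $\mb{y}$ equals a value different from $0$ is nonempty and excludes ground, so the cut between $S$ and its complement is a nonempty cutset. It is nonempty because the circuit is connected (item 1). Every branch crossing this cut joins nodes with different potentials, so it cannot be a C, L, R or V branch, and must therefore be a current source. This produces an $\mr{I}$-cutset, contradicting item 3. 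A minimal cutset can be extracted if the definition of cutset requires minimality.

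The main subtlety is making precise the graph-theoretic notions (loop, cutset) that the assumptions refer to, and justifying the passage between reduced and unreduced incidence matrices. We will handle the latter by noting that the deleted ground row is determined by the others. Everything else is elementary linear algebra on incidence matrices, so we may alternatively cite \cite[Theorem 2.2]{estevez2000}.
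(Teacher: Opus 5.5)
Your argument is correct, but it does not follow the paper, which gives no proof of this proposition and simply defers to \cite[Theorem 2.2]{estevez2000}. You instead prove both items directly from the graph.

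For 2), the deleted ground row is minus the sum of the others, so a nonzero kernel vector of the reduced voltage-source incidence matrix also annihilates the unreduced one. Every node therefore meets its support in zero or at least two branches. The support then contains a cycle, and since self-loops are excluded, this is a loop of voltage sources only.

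For 3), a nonzero left-kernel vector, extended by zero at ground, is a potential that is constant across every C, L, R and V branch. Take the edge cut around a nonzero level set (or around the whole nonzero support). It is nonempty by connectedness, only current sources can cross it, and it contains a minimal cutset, i.e.\ an I-cutset.

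The citation buys brevity. Your route buys a self-contained proof of exactly the implication the decoupling needs. It also makes explicit which parts of \cref{as:wp} are used: connectedness, absence of self-loops, and items 2 and 3. And it fits the spanning-tree and contraction reasoning the paper itself uses later in the proofs of \cref{prop:tbf} and \cref{prop:cs}. Your opening paragraph on forests and spanning trees is more than you need, since the two direct arguments already stand on their own.
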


In addition to \cref{as:wp}, we also assume the topology of the circuit to be fixed. In cases where the topology changes over time, e.g.\ due to switches, the corresponding switching events need to be detected and resolved during time integration, see e.g.\ \cite{tant2018}. The decoupling may then be computed independently (in advance) for each relevant topology.

The following common assumption, compare e.g.\ \cite[Theorem 4.1]{estevez2000}, ensures all passive elements behave passively.

\begin{assumption}[Positive definite element Jacobians]
    \label{as:ej}
    The Jacobians
    \begin{align*}
        \mb{C}(\mb{x}) &\coloneqq \pp[\qC(\mb{x})]{\mb{x}}\\
        \mb{L}(\mb{x}) &\coloneqq \pp[\phiL(\mb{x})]{\mb{x}}\\
        \mb{G}(\mb{x}, \mb{y}) &\coloneqq \pp[\gR(\mb{x}, \mb{y})]{\mb{x}}
    \end{align*}
    of the functions $\qC$, $\phiL$ and $\gR$, describing capacitors, inductors and resistors respectively, are all positive definite.
\end{assumption}

We explicitly note that the function describing resistors contains an additional non-standard argument, as this will become important later. Furthermore, we remark that \cref{as:ej} also allows for multiport elements, compare \cite{tischendorf1999} and \cite{estevez2000}. This implies that elements of the same type, e.g.\ two resistors, can control each others values, as long as the resulting Jacobian is still positive definite. In particular this includes transformers, which can be interpreted as multiport inductors.

In the following, we distinguish between \emph{independent} sources that only depend on time, indicated by a subscript $\mr{s}$, and \emph{controlled} sources which may also depend on solution variables, indicated by a subscript $\mr{c}$. We then write $\A[V] = [\A[V_s], \A[V_c]]$ and $\A[I] = [\A[I_s], \A[I_c]]$ for the corresponding (reduced) incidence matrices of voltage and current sources, respectively. We further split the controlled current sources into four types $\AIc[i]$, $1 \leq i \leq 4$, which will be used to distinguish their topological properties, and write $\A[I_c] = [\AIc[1], \dotsc, \AIc[4]]$. Each controlled current source belongs to exactly one of the four types. Using this notation, we can now state the assumptions on controlled sources.

\begin{assumption}[Assumptions on controlled sources]
    \label{as:cs}
    ~\begin{enumerate}
        \item Controlled voltage sources are not part of any $\mr{C}$-$\mr{V}$ loop. (A $\mr{C}$-$\mr{V}$ loop is a loop made up of only capacitors or\footnote{We use or in the logical sense, meaning inclusive or.} voltage sources.)
        \item Controlled current sources described by $\AIc[1]$ have a $\mr{V_s}$-only path between their terminals.
        \item Controlled current sources described by $\AIc[2]$ have a $\mr{C}$-$\mr{V_s}$-only path between their terminals.
        \item Controlled current sources described by $\AIc[3]$ have a $\mr{C}$-$\mr{V}$-only path between their terminals.
        \item Controlled current sources described by $\AIc[4]$ are not part of any $\mr{L}$-$\mr{I}$ cutset. (An $\mr{L}$-$\mr{I}$ cutset is a cutset made up of only inductors or current sources.)
    \end{enumerate}
\end{assumption}

In order to gain some intuition for these assumptions, we compare them to those of the topological index result about MNA \cite[Theorem 4.1]{estevez2000}. In doing so, we first note that \cref{as:cs}.1 and \cref{as:cs}.5 can be seen to ensure that controlled sources do not appear differentiated in time in any of the expressions resulting from the decoupling. Furthermore, the remaining assumptions ensure that the algebraic equations derived by the decoupling remain uniquely solvable for the algebraic variables, compare \cref{def:seio}, while allowing for as many controlled source contributions as possible. Similar to \cite{estevez2000}, we also remark that the listed conditions are sufficient, but not necessary, i.e.\ there might be circuits that violate one of the assumptions but still allow for a topological decoupling. Lastly, \cref{fig:cs_assumptions} shows an abstract illustration of the conditions, which we will come back to in \cref{pr:cs}.

\begin{figure*}
    \begin{center}
        \subfloat[$\mr{C}$-$\mr{V}$ loops]{\label{fig:cv_loops} \includegraphics[width=0.23\textwidth]{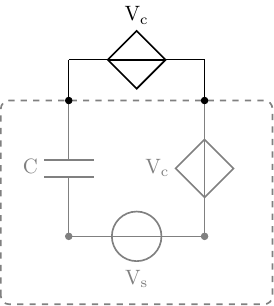}}
        \hspace{2em}
        \subfloat[$\mr{C}$-$\mr{V}$-only paths]{\label{fig:cv_only_paths} \includegraphics[width=0.23\textwidth]{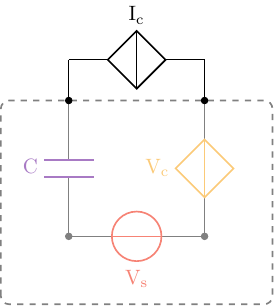}}
        \hspace{2em}
        \subfloat[$\mr{L}$-$\mr{I}$ cutsets]{\label{fig:li_cutsets} \includegraphics[width=0.23\textwidth]{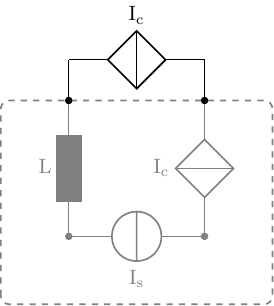}}
    \end{center}
    \caption{Illustrations of the conditions from \cref{as:cs}.}
    \label{fig:cs_assumptions}
\end{figure*}

As with \cref{as:wp}, we can again derive algebraic conditions from these topological assumptions. (The proof is provided later in \cref{pr:cs}.) Introducing a shorthand for the product of multiple basis matrices, e.g.\ $\Q[V_s C] \coloneqq \Q[V_s] \Q[C]$, the derived conditions read as follows. (The transpose of the shorthand should be read as, e.g., $\QT[V_s C] = \QT[C] \QT[V_s]$.)

\begin{proposition}
    \label{prop:cs}
    \Cref{as:cs} implies that
    \begin{enumerate}
        \item $\AT[V_c] \Q[V_s C]$ has full row rank.
        \item $\phantom{_\mr{C V_c R}} \QT[V_s] \AIc[1] = \mb{0}$.
        \item $\phantom{_\mr{V_c R}} \QT[V_s C] \AIc[2] = \mb{0}$.
        \item $\phantom{_\mr{R}} \QT[V_s C V_c] \AIc[3] = \mb{0}$.
        \item $\phantom{_\mr{}} \QT[V_s C V_c R] \AIc[4] = \mb{0}$.
    \end{enumerate}
\end{proposition}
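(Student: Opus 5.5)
The plan is to first obtain a clean algebraic description of the nested products of basis matrices, and then translate each condition into a statement about connected components of subgraphs. Throughout, I read the shorthand in the nested sense of \cref{def:bf}: $\Q[V_s]$ spans $\ker \AT[V_s]$, $\Q[C]$ spans $\ker(\AT[C]\Q[V_s])$, $\Q[V_c]$ spans $\ker(\AT[V_c]\Q[V_s C])$, and $\Q[R]$ spans $\ker(\AT[R]\Q[V_s C V_c])$.

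\textbf{Step 1 (intersection lemma).} I will show by induction on the number of factors that
\begin{align*}
    \im \Q[V_s C] = \ker [\A[V_s], \A[C]]^\top ,
\end{align*}
and analogously for $\Q[V_s C V_c]$ and $\Q[V_s C V_c R]$. The inductive step is the following observation. Since $\Q[V_s]$ has full column rank, a vector $\Q[V_s]\mb{z}$ lies in $\ker \AT[C]$ if and only if $\mb{z} \in \ker(\AT[C]\Q[V_s]) = \im \Q[C]$. Hence $\im \Q[V_s]\Q[C] = \im\Q[V_s] \cap \ker\AT[C]$, and each product again has full column rank.

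\textbf{Step 2 (graph fact).} For a set of branches $X$ with reduced incidence matrix $\A[X]$, I will use the standard fact about $\ker \AT[X]$. A node potential vector $\mb{e}$ (with ground potential $0$) satisfies $\AT[X]\mb{e}=\mb{0}$ if and only if $\mb{e}$ is constant on every connected component of the subgraph formed by the $X$-branches. In particular, $\mb{e}$ vanishes on the component containing ground.

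For a single branch $j$ with terminals $a,b$, this gives a characterization. We have $\mb{e}^\top \A[j] = e_a - e_b = 0$ for all such $\mb{e}$ if and only if $a$ and $b$ lie in the same component, i.e.\ if and only if there is an $X$-only path between them. Combining this with Step 1 proves claims 2 through 5 directly:
\begin{itemize}
    \item a $\mr{V_s}$-only path gives $\QT[V_s]\AIc[1]=\mb{0}$;
    \item a $\mr{C}$-$\mr{V_s}$ path gives claim 3;
    \item a $\mr{C}$-$\mr{V}$ path gives claim 4, since $\mr{V}=\mr{V_s}\cup\mr{V_c}$;
    \item for claim 5, I use that a branch lies in no $\mr{L}$-$\mr{I}$ cutset exactly when its terminals are joined by a path avoiding $\mr{L}$ and $\mr{I}$ branches, i.e.\ a $\mr{C}$-$\mr{V}$-$\mr{R}$ path.
\end{itemize}
The cutset/path duality in the last item is the standard statement that removing all other branches of the complement leaves the two terminals connected. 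I will state it as a short lemma.

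\textbf{Step 3 (claim 1, the main obstacle).} The first claim needs a rank argument rather than a kernel argument, and I expect it to be the hardest step. By Step 1, $\AT[V_c]\Q[V_s C]$ has full row rank if and only if there is no $\mb{y}\neq\mb{0}$ with $\mb{y}^\top\AT[V_c]\mb{e}=0$ for all $\mb{e}\in\ker[\A[V_s],\A[C]]^\top$.

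Using Step 2, I will work in the quotient graph obtained by contracting each connected component of the $\mr{C}$-$\mr{V_s}$ subgraph to a single node, with the ground component as ground. Then $\ker[\A[V_s],\A[C]]^\top$ is exactly the space of arbitrary potentials on the contracted nodes. Consequently, $\AT[V_c]\Q[V_s C]$ is, up to a column basis change, the transposed reduced incidence matrix $\tilde{\A}_{\mr{V_c}}^\top$ of the $\mr{V_c}$ branches in the quotient graph.

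This matrix has full row rank if and only if the $\mr{V_c}$ branches contain no loop in the quotient graph, and contracted self-loops count as loops. Such a loop lifts back, through $\mr{C}$-$\mr{V_s}$ paths inside the contracted components, to a $\mr{C}$-$\mr{V}$ loop in the original circuit containing a controlled voltage source. That contradicts \cref{as:cs}.1. The care needed here is in the lifting: the paths inside components must be chosen so that the result is a genuine loop. If the lifted closed walk is not simple, I will extract a simple cycle that still contains a $\mr{V_c}$ branch, which is possible because that branch appears exactly once.
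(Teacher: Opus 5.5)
Your proposal is correct and follows essentially the same route as the paper. In both, the products of incidence and basis matrices are read as incidence matrices of the graph contracted along the $\mathrm{C}$/$\mathrm{V_s}$ (etc.) components. The required paths turn the controlled current sources into self-loops, giving zero rows. The absence of $\mathrm{C}$-$\mathrm{V}$ loops makes the contracted $\mathrm{V_c}$ graph a forest without self-loops, giving full row rank. Claim 5 then reduces to claims 2--4 via the path/cutset duality.

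Your Step 1 identifies the image of the nested product with the kernel of the stacked transposed incidence matrices. This is an explicit, basis-independent justification of what the paper takes from the specific topological basis matrices built in the proof of \cref{prop:tbf}.
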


In addition to controlled sources, we also allow for controlled resistors, since these frequently occur in circuit models, e.g.\ in the context of power electronics. This constitutes the reason for the extra argument of $\gR$ in \cref{as:ej}. Furthermore, we introduce a shorthand for groups of incidence matrices, e.g.\ $\A[CV] \coloneqq [\A[C], \A[V]]$. Finally, we write
\begin{align}
     \ic \big( \AT\! \vphi, \iL, \iVc, t \big) = \begin{bmatrix}
        \ic[1] \big( \AT\! \vphi, \iL, \iVc, t \big)\\[0.5ex]
        \ic[2] \big( \AT[C R V] \vphi, \iL, \iVc, t \big)\\[0.5ex]
        \ic[3] \big( \AT[C R V] \vphi, \iL, t \big)\\[0.5ex]
        \ic[4] \big( \AT[C V] \vphi, \iL, t \big)
     \end{bmatrix} \label{eq:cs}
\end{align}
for the controlled current sources, where we only include those dependencies, that allow for a topological decoupling. In general, controlled current sources may depend on specific subsets of the branch voltages $\AT\! \vphi$, where $\vphi$ are the nodal potentials, as well as currents through inductors, denoted by $\iL$. Depending on their topological configuration, they may also depend on currents through controlled voltage sources, given by $\iVc$, however they may never depend on currents through independent voltage sources, which are written as $\iVs$.

Using all the introduced notation, we can write MNA in the following form
\begin{subequations}
    \label{eq:mnatd}
    \begin{alignat}{2}
            &\A[C] \dd{t} \qC \big( \AT[C] \vphi \big) &&\notag\\
            &\quad + \A[R] \gR \big( \AT[R] \vphi, \AT[C V] \vphi, \iL, t \big) &&\notag\\
            &\quad + \A[L] \iL + \A[V_s] \iVs + \A[V_c] \iVc &&\notag\\
            &\quad + \A[I_s] \is(t) + \A[I_c] \ic \big( \AT\! \vphi, \iL, \iVc, t \big) &&= \mb{0} \label{eq:mnatd.1}\\
        &\dd{t} \phiL(\iL) - \AT[L] \vphi &&= \mb{0} \label{eq:mnatd.2}\\
        &\AT[V_s] \vphi - \vs(t) &&= \mb{0} \label{eq:mnatd.3}\\
        &\AT[V_c] \vphi - \vc \big( \AT[C V_s] \vphi, \iL, t \big) &&= \mb{0}, \label{eq:mnatd.4}
    \end{alignat}
\end{subequations}
where we again only include those dependencies in expressions for controlled elements, which still allow for a topological decoupling. For resistors this implies that, outside of voltages across other resistors, they may also depend on voltages across capacitors or voltage sources, as well as currents through inductors. Controlled voltage sources on the other hand, represented by $\vc$, may only depend on voltages across capacitors or independent voltage sources. Similar to resistors, they may also depend on inductor currents, but not on any other solution variables, such as e.g.\ currents through voltage sources. Lastly, we note that $\is(t)$ and $\vs(t)$ describe independent current and voltage sources, respectively.

\section{Topological decoupling}
\label{sec:dec}
The following theorem states the main result regarding the decoupling of the equations of MNA.

\begin{theorem}[Decoupling of MNA]
    \label{thm:dec}
    Using \cref{as:wp,,as:ej,as:cs}, the system of MNA, as stated in \cref{eq:mnatd}, can be decoupled to a semi-explicit index one DAE using only basis matrices as introduced in \cref{def:bf}.
\end{theorem}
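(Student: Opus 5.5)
The plan is to carry out the dissection procedure of \cite{jansen2014}: successively split the node potentials $\vphi$ with the basis matrices of \cref{def:bf}, project \cref{eq:mnatd.1} onto the corresponding complementary subspaces, and then read off which variables are differential and which are algebraic. The novelty compared with \cite{jansen2014} is checking that the controlled contributions in \cref{eq:cs,eq:mnatd} never end up differentiated and never destroy the regularity of the final algebraic Jacobian. The filtration follows the order in \cref{prop:cs}.

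\emph{Step 1 (independent voltage sources).} Write $\vphi = \P[V_s]\vphi_1 + \Q[V_s]\vphi_2$, where $\Q[V_s]$ spans $\ker\AT[V_s]$. By \cref{prop:wp}.2 the matrix $\AT[V_s]\P[V_s]$ is regular, so \cref{eq:mnatd.3} yields $\vphi_1$ algebraically. Multiplying \cref{eq:mnatd.1} by $\PT[V_s]$ gives an equation that determines $\iVs$. Multiplying by $\QT[V_s]$ removes $\iVs$, and by \cref{prop:cs}.2 it also removes $\AIc[1]$.

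\emph{Step 2 (capacitors).} Split $\Q[V_s]\vphi_2$ further with $\P[C],\Q[C]$ for $\AT[C]\Q[V_s]$. The $\PT[C]\QT[V_s]$-projection contains $\PT[C]\QT[V_s]\A[C]\,\mb{C}\,\AT[C]\Q[V_s]\P[C]$, which is positive definite by \cref{as:ej}. This makes the corresponding potentials (charge-like quantities) the differential variables of \cref{eq:seio.1}. The derivative of $\vphi_1$ appearing there is only $\vs'(t)$, a known function. The $\QT[V_s C]$-projection is algebraic. Applying \cref{prop:cs}.3 to that projection kills $\AIc[2]$, so the currents $\iVc$ on which $\ic[2]$ depends never appear differentiated.

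\emph{Steps 3 and 4 (controlled voltage sources, resistors, inductors).} Use \cref{eq:mnatd.4} together with \cref{prop:cs}.1: full row rank of $\AT[V_c]\Q[V_s C]$ lets us solve for the $\Q[V_s C]$-component along a $\P[V_c]$ split. The right-hand side $\vc$ depends only on $\AT[C V_s]\vphi$ and $\iL$. These are differential or already-determined quantities, so no derivative of $\vc$ is needed; this is exactly where \cref{as:cs}.1 enters. The $\PT[V_c]$-projection then determines $\iVc$. Next, on $\QT[V_s C V_c]$, where \cref{prop:cs}.4 removes $\AIc[3]$, split with $\P[R],\Q[R]$. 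The resistive block $\PT\A[R]\,\mb{G}\,\AT[R]\P$ is positive definite, so it determines the resistive potentials algebraically. Its extra arguments $\AT[CV]\vphi$ and $\iL$ belong to earlier layers, so they contribute no diagonal Jacobian entries. Finally, on $\QT[V_s C V_c R]$, where \cref{prop:cs}.5 removes $\AIc[4]$, the remaining equation is $\QT[V_s C V_c R](\A[L]\iL + \A[I_s]\is)=\mb{0}$. This is a constraint on $\iL$. Split $\iL$ with $\V,\W$ for $\QT[V_s C V_c R]\A[L]$. The $\V$-part becomes algebraic, and its derivative is computed from known data, with no controlled sources involved thanks to \cref{as:cs}.5. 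The $\W$-part stays differential via \cref{eq:mnatd.2} premultiplied by $\WT$ and using positive definiteness of $\mb{L}$. The leftover potentials in $\Q[V_s C V_c R]$ are then obtained from the $\V$-projection of \cref{eq:mnatd.2}, again through a positive definite product.

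\emph{Main obstacle.} The hardest part is verifying that the full algebraic Jacobian $\partial\mb{g}/\partial\mb{y}$ is regular. The plan is to order the algebraic unknowns by layer and show the Jacobian is block triangular. Its diagonal blocks are regular: they are either positive definite (from \cref{as:ej}) or full rank (from \cref{prop:wp} and \cref{prop:cs}.1). The off-diagonal couplings come from controlled sources, and the dependency restrictions in \cref{eq:cs} and \cref{eq:mnatd} are exactly what keeps them on one side of the diagonal. For example, $\ic[2]$ depends on $\iVc$, but $\AIc[2]$ appears only in equations that are solved after $\iVc$ is fixed. This triangularity check must be done carefully for each of $\ic[1]$ through $\ic[4]$, and I expect it to be the main technical effort. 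The matrix products such as $\Q[V_s C]$ guarantee that every step uses only basis matrices.
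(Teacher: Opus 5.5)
Your dissection matches the paper's: the same splitting order $\mr{V_s}$, $\mr{C}$, $\mr{V_c}$, $\mr{R}$, then $\iL=\V[L]\iLt+\W[L]\iLb$, the same differential variables $\vphit[C]$ and $\iLb$, and the same block-triangular argument. It has three concrete defects, though.

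First, the equations you use to recover $\iVs$ (the $\PT[V_s]$-projection of \cref{eq:mnatd.1}) and $\vphib[R]$ (the $\VT[L]$-projection of \cref{eq:mnatd.2}) contain $\dd{t}\qC$ and $\dd{t}\phiL$, i.e.\ $\dd{t}\vphit[C]$ and $\dd{t}\iLb$. They therefore do not have the form $\mb{0}=\mb{g}(\mb{x},\mb{y},t)$ required by \cref{def:seio}, and cannot simply be added to your algebraic Jacobian. The paper instead treats $\mb{z}^\T=[\iVs^\T,\vphib[R]^\T]$ as output variables. It checks that $\mb{z}$ enters neither the differential equations nor $\mb{g}$, and recovers $\mb{z}$ afterwards. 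That check uses $\QT[V_s]\A[V_s]=\mb{0}$, the fact that no source depends on $\iVs$, $\AT[C R V]\Q[V_s C V_c R]=\mb{0}$, $\WT[L]\AT[L]\Q[V_s C V_c R]=\mb{0}$, and that $\ic[1]$ (the only source allowed to see $\AT[L I]\vphi$) is annihilated by $\QT[V_s]$. Keeping them algebraic would require first substituting $\mb{x}'$ from the differential part, which is legitimate only because of the same independence check.

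Second, the coefficient of $\vphib[R]$, namely $-\VT[L]\AT[L]\Q[V_s C V_c R]$, is not positive definite; in general it is not even symmetric. It is minus the transpose of $\QT[V_s C V_c R]\A[L]\V[L]$. That matrix is regular because $\QT[V_s C V_c R]\A[L]$ has full row rank, which follows from the full row rank of $[\A[C],\A[L],\A[R],\A[V]]$ in \cref{prop:wp} (no $\mr{I}$-cutsets) via \cref{lem:ap}.3 and then \cref{lem:ap}.2. You never make this step, yet it is also what makes the $\iLt$-block square and regular.

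Third, ordering the algebraic unknowns in the order of your steps does not give a triangular Jacobian. The $\vphit[V_c]$-, $\vphit[R]$- and $\iVc$-equations all depend on $\iLt$ (through $\vc$, $\gR$, $\A[L]\iL$ and $\ic$), which your steps determine last. The $\iVc$-equation also depends on $\vphit[R]$ through $\gR$ and $\ic[3]$. The order that works, and the one the paper uses, is $(\vphit[V_s],\iLt,\vphit[V_c],\vphit[R],\iVc)$:
\begin{itemize}
\item the $\iLt$-equation depends only on $t$;
\item the $\vphit[R]$-equation is independent of $\iVc$, because $\QT[V_s C V_c]\A[V_c]=\mb{0}$ and \cref{prop:cs}.2 and \cref{prop:cs}.3 remove $\ic[1]$ and $\ic[2]$ (the only sources allowed to depend on $\iVc$) from every $\QT[V_s C]$-projection.
\end{itemize}
The same fact makes the $\iVc$-block exactly $\PT[V_c]\QT[V_s C]\A[V_c]$. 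This is the role \cref{prop:cs}.3 plays in the argument, not the differentiation issue you attribute to it. With these three repairs your plan becomes the paper's proof.
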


Before proving \cref{thm:dec}, we collect a few auxiliary results. These will be needed at key points during the decoupling, however their proofs are shifted to \cref{sec:app}, to not distract from the main argument.

\begin{lemma}
    \label{lem:ap}
    ~\begin{enumerate}
        \item Let $\mb{P}$ be a basis matrix of $\AT$, then $\AT \mb{P}$ has full column rank. If $\AT$ further has full row rank, then $\AT \mb{P}$ is regular.

        \item Let $\V$ be a basis matrix of $\AT$ and let $\AT$ have full column rank, then $\A \V$ is regular.

        \item Let $n \in \mathbb{N}$ and $[\mb{A}_1, \mb{A}_2, \dotsc, \mb{A}_n]$ have full row rank, then $\mb{Q}_{n-1}^\T \dotsb \mb{Q}_1^\T \mb{A}_n$ also has full row rank, where $\mb{Q}_1$ is a basis matrix of $\mb{A}_1^\T$ and $\mb{Q}_m, m \geq 2$ is a basis matrix of $\mb{A}_m^\T \mb{Q}_1 \dotsb \mb{Q}_{m-1}$.

        \item Let $\A$, $\mb{B}$ be matrices with $\A$ being positive definite and $\mb{B}$ having full column rank, then $\mb{B}^\T \A \mb{B}$ is also positive definite.

        \item Let $n \in \mathbb{N}$ and $\mb{A}_1, \mb{A}_2, \dotsc, \mb{A}_n$ be regular, then any block lower triangular matrix
        \begin{align*}
            \mb{A} = \begin{bmatrix}
                \mb{A}_1 & \mb{0} & \cdots & \mb{0}\\
                \bullet & \mb{A}_2 & \ddots & \vdots\\
                \vdots & \ddots & \ddots & \mb{0}\\
                \bullet & \cdots & \bullet & \mb{A}_n
            \end{bmatrix},
        \end{align*}
        where the $\bullet$ denote arbitrary blocks, is also regular.
    \end{enumerate}
\end{lemma}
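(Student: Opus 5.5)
Since the lemma collects five independent statements, I would prove them separately. Each reduces either to the defining properties of the basis matrices in \cref{def:bf} or to elementary linear algebra, so I do not expect any deep obstacle. The only step needing some care is the index bookkeeping in part~3.

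For part~1, let $[\mb{P}, \Q]$ be as in \cref{def:bf}. Suppose $\AT \mb{P} \mb{x} = \mb{0}$. Then $\mb{P}\mb{x} \in \ker \AT = \im \Q$, so $\mb{P}\mb{x} = \Q\mb{y}$ for some $\mb{y}$. Linear independence of the columns of $[\mb{P}, \Q]$ forces $\mb{x} = \mb{0}$, which gives full column rank. If $\AT \in \mathbb{R}^{b\times n}$ has full row rank, then $\dim\ker\AT = n-b$, so $\mb{P}$ has exactly $b$ columns. Hence $\AT\mb{P}$ is square with full column rank, i.e.\ regular.

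Part~2 follows by the same argument with $[\V,\W]$ and $\im\W = \ker\A$. If $\AT$ has full column rank $n$, then $\dim\ker\A = b-n$, so $\V$ has $n$ columns and $\A\V$ is $n\times n$. From $\A\V\mb{x}=\mb{0}$ we get $\V\mb{x}\in\im\W$, hence $\mb{x}=\mb{0}$.

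For part~3, set $\mb{M} \coloneqq \mb{Q}_{n-1}^\T\dotsb\mb{Q}_1^\T$, with $\mb{M}$ the identity if $n=1$. I would first show that $\mb{Q}_m^\T\dotsb\mb{Q}_1^\T\mb{A}_k = \mb{0}$ for all $k \le m$. For $k=m$ this is $(\mb{A}_m^\T\mb{Q}_1\dotsb\mb{Q}_m)^\T = \mb{0}$ by the definition of $\mb{Q}_m$. Further left multiplications by $\mb{Q}_{m+1}^\T,\dotsc$ preserve the zero. Consequently
\begin{align*}
    \mb{M}[\mb{A}_1,\dotsc,\mb{A}_n] = [\mb{0},\dotsc,\mb{0},\mb{M}\mb{A}_n].
\end{align*}
Each $\mb{Q}_m$ has linearly independent columns, so $\mb{M}^\T = \mb{Q}_1\dotsb\mb{Q}_{n-1}$ is injective. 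Now take $\mb{y}$ with $\mb{y}^\T\mb{M}\mb{A}_n = \mb{0}$. Then $(\mb{M}^\T\mb{y})^\T[\mb{A}_1,\dotsc,\mb{A}_n]=\mb{0}$, and full row rank of $[\mb{A}_1,\dotsc,\mb{A}_n]$ gives $\mb{M}^\T\mb{y}=\mb{0}$. Injectivity of $\mb{M}^\T$ then gives $\mb{y}=\mb{0}$, so $\mb{M}\mb{A}_n$ has full row rank.

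For part~4, take $\mb{x}\neq\mb{0}$. Full column rank of $\mb{B}$ gives $\mb{B}\mb{x}\neq\mb{0}$, and hence $\mb{x}^\T\mb{B}^\T\A\mb{B}\mb{x} = (\mb{B}\mb{x})^\T\A(\mb{B}\mb{x})>0$. This argument does not require symmetry of $\A$.

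For part~5, the determinant of a block lower triangular matrix equals $\prod_i\det\mb{A}_i \neq 0$. Alternatively, $\mb{A}\mb{x}=\mb{0}$ can be solved by block forward substitution: the first block row gives $\mb{x}_1=\mb{0}$ since $\mb{A}_1$ is regular, and inductively every $\mb{x}_i=\mb{0}$.
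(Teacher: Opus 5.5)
Your proof is correct and follows the paper's argument essentially step by step. Parts 1, 2 and 5 match the paper. Your part 3 is the paper's argument in transposed form: you test a left null vector of $\mb{M}\mb{A}_n$ against $[\mb{A}_1,\dotsc,\mb{A}_n]$, whereas the paper pushes a nonzero vector through $\mb{Q}_1\cdots\mb{Q}_{n-1}$ and uses full column rank of the stacked transposes. The only real difference is part 4, which you prove directly in one line where the paper merely cites the literature.
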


\begin{proof}[Proof of \cref{thm:dec}]
    \label{pr:dec}
    The proof is split into four steps, the first two of which eliminate \cref{eq:mnatd.3} and \cref{eq:mnatd.4}, before dealing with \cref{eq:mnatd.2} and finally identifying a semi-explicit index one DAE, as promised. The first two steps amount to identifying and isolating constraints on the nodal potentials that are due to (independent and controlled) voltage sources. These constraints in turn imply restrictions on the currents through voltage sources, which can be handled in an analogous fashion due to the similarity of the conditions. Lastly, we isolate constraints on the currents through inductors, which arise from particular topological configurations.

    \item \emph{Eliminating \cref{eq:mnatd.3}.} Based on ideas from \cite{tischendorf1999,estevez2000} and following a strategy similar to \cite[Section 7.1.3]{jansen2014}, we begin by introducing basis matrices $\mb{P}_\mr{V_s}$ and $\Q[V_s]$ of $\AT[V_s]$ to split the nodal potentials as $\vphi = \mb{P}_\mr{V_s} \vphit[V_s] + \Q[V_s] \vphib[V_s]$. Inserting this splitting into \cref{eq:mnatd.3} gives
    \begin{align}
        g_1(\vphit[V_s], t) \coloneqq \AT[V_s] \mb{P}_\mr{V_s} \vphit[V_s] - \vs(t) = \mb{0} \label{eq:mnatd.a1}
    \end{align}
    and this equation is uniquely solvable for $\vphit[V_s]$ by \cref{prop:wp}.1 and \cref{lem:ap}.1. Note that we defined an auxiliary function $g_1$ here (in reference to the notation of \cref{def:seio}) and that we will continue to define similar functions in order to help simplify notation later on.

    Moving on with the decoupling, we also split \cref{eq:mnatd.1}, by multiplying with $\PT[V_s]$ and $\QT[V_s]$ from the left. This yields
    \begin{alignat}{2}
        &\PT[V_s] \bigg( \A[C] \dd{t} \qC \big( \AT[C] \vphi \big) &&\notag\\
        &\quad + \A[R] \gR \big( \AT[R] \vphi, \AT[C V] \vphi, \iL, t \big) &&\notag\\
        &\quad + \A[L] \iL + \A[V_c] \iVc + \A[I_s] \is(t) &&\notag\\
        &\quad + \A[I_c] \ic \big( \AT\! \vphi, \iL, \iVc, t \big) \bigg) &&\notag\\
        &\quad + \PT[V_s] \A[V_s] \iVs &&= \mb{0} \label{eq:mnatd.a2}\\
        &\QT[V_s] \bigg( \A[C] \dd{t} \qC(\cdot) + \A[R] \gR(\cdot) &&\notag\\
        &\quad + \A[L] \iL + \A[V_c] \iVc + \A[I_s] \is(t) \bigg) &&\notag\\
        &\quad + \QT[V_s] \A[I_c] \ic \big( \AT[C R V] \vphi, \iL, \iVc, t \big) &&= \mb{0}, \label{eq:mnatd.i1}
    \end{alignat}
    where we note that \cref{eq:mnatd.a2} is uniquely solvable for $\iVs$ by the same reasoning as before. We further note that the arguments of the controlled current source term in \cref{eq:mnatd.i1} changed in accordance with \cref{eq:cs} and \cref{prop:cs}.2. Here and in the following, we replace dependencies that are not central to an argument with $\cdot$ to improve readability.

    \item \emph{Eliminating \cref{eq:mnatd.4}.} In a second step, we introduce basis matrices $\mb{P}_\mr{C}$ and $\Q[C]$ of $\AT[C] \Q[V_s]$ and further split the nodal potentials as $\vphib[V_s] = \mb{P}_\mr{C} \vphit[C] + \Q[C] \vphib[C]$. We also continue to split \cref{eq:mnatd.i1}, by multiplying with $\PT[C]$ and $\QT[C]$ from the left to obtain
    \begin{alignat}{2}
        &\PT[C] \QT[V_s] \bigg( \A[C] \dd{t} \qC(\cdot) + \A[R] \gR(\cdot) &&\notag\\
        &\quad + \A[L] \iL + \A[V_c] \iVc + \A[I_s] \is(t) \bigg) &&\notag\\
        &\quad + \PT[C] \QT[V_s] \A[I_c] \ic \big( \AT[C R V] \vphi, \iL, \iVc, t \big) &&= \mb{0} \label{eq:mnatd.d1}\\
        &\QT[V_s C] \big( \A[R] \gR(\cdot) + \A[L] \iL + \A[V_c] \iVc + \A[I_s] \is(t) \big) &&\notag\\
        &\quad + \QT[V_s C] \A[I_c] \ic \big( \AT[C R V] \vphi, \iL, t \big) &&= \mb{0}, \label{eq:mnatd.i2}
    \end{alignat}
    where we again changed the arguments of the controlled current sources in \cref{eq:mnatd.i2}, in line with \cref{eq:cs} and \cref{prop:cs}.3.

    Next, we use basis matrices $\mb{P}_\mr{V_c}$ and $\Q[V_c]$ of $\AT[V_c] \Q[V_s C]$ to split the nodal potentials once more as $\vphib[C] = \mb{P}_\mr{V_c} \vphit[V_c] + \Q[V_c] \vphib[V_c]$. Inserting the entire splitting of $\vphi$ up to this point into \cref{eq:mnatd.4}, we find
    \begin{align}
        g_3(\cdot) \coloneqq\ &\AT[V_c] \Big( \mb{P}_\mr{V_s} \vphit[V_s] + \Q[V_s] \big( \mb{P}_\mr{C} \vphit[C] + \Q[C] \mb{P}_\mr{V_c} \vphit[V_c] \big) \Big) \notag\\
        &- \vc \Big( \AT[C V_s] \big( \mb{P}_\mr{V_s} \vphit[V_s] + \Q[V_s] \mb{P}_\mr{C} \vphit[C] \big), \iL, t \Big) = \mb{0}, \label{eq:mnatd.a3}
    \end{align}
    which is uniquely solvable for $\vphit[V_c]$ by \cref{prop:cs}.1 and \cref{lem:ap}.1. We also continue to split \cref{eq:mnatd.i2}. Multiplying with $\PT[V_c]$ and $\QT[V_c]$ from the left yields
    \begin{alignat}{2}
        g_5(\cdot) \coloneqq\ &\PT[V_c] \QT[V_s C] \Big( \A[R] \gR \big( \AT[R] \vphi, \AT[C V] \vphi, \iL, t \big) &&\notag\\
        &\quad + \A[L] \iL + \A[I_s] \is(t) \Big) &&\notag\\
        &\quad + \PT[V_c] \QT[V_s C] \A[I_c] \ic \big( \AT[C R V] \vphi, \iL, t \big) &&\notag\\
        &\quad + \PT[V_c] \QT[V_s C] \A[V_c] \iVc &&= \mb{0} \label{eq:mnatd.a4}\\
        &\QT[V_s C V_c] \Big( \A[R] \gR(\cdot) + \A[L] \iL + \A[I_s] \is(t) \Big) &&\notag\\
        &\quad + \QT[V_s C V_c] \A[I_c] \ic \big( \AT[C V] \vphi, \iL, t \big) &&= \mb{0}, \label{eq:mnatd.i3}
    \end{alignat}
    where we note that \cref{eq:mnatd.a4} is uniquely solvable for $\iVc$ by the same argument as before. The dependencies of the controlled current sources in \cref{eq:mnatd.i3} also changed again, this time according to \cref{eq:cs} and \cref{prop:cs}.4.

    \item \emph{Dealing with \cref{eq:mnatd.2}.} We complete the splitting of the nodal potentials by introducing basis matrices $\mb{P}_\mr{R}$ and $\Q[R]$ of $\AT[R] \Q[V_s C V_c]$ and writing $\vphib[V_c] = \mb{P}_\mr{R} \vphit[R] + \Q[R] \vphib[R]$. Multiplying \cref{eq:mnatd.i3} by $\PT[R]$ and $\QT[R]$ from the left gives
    \begin{alignat}{2}
        g_4(\cdot) \coloneqq\ &\PT[R] \QT[V_s C V_c] \Big( \A[R] \gR(\cdot) + \A[L] \iL + \A[I_s] \is(t) \Big) &&\notag\\
        &\quad + \PT[R] \QT[V_s C V_c] \A[I_c] \ic \big( \AT[C V] \vphi, \iL, t \big) &&= \mb{0} \label{eq:mnatd.a5}\\
        &\QT[V_s C V_c R] \big( \A[L] \iL + \A[I_s] \is(t) \big) &&= \mb{0}, \label{eq:mnatd.i5}
    \end{alignat}
    where we eliminated also the last controlled current source type from \cref{eq:mnatd.i5}, in line with \cref{prop:cs}.5. In a last step, we introduce basis matrices $\V[L]$ and $\W[L]$ of $ \AT[L] \Q[V_s C V_c R]$ and split the inductor currents as $\iL = \V[L] \iLt + \W[L] \iLb$. Inserting this splitting into \cref{eq:mnatd.i5}, we find
    \begin{align}
        g_2(\cdot) \coloneqq \QT[V_s C V_c R] \big( \A[L] \V[L] \iLt + \A[I_s] \is(t) \big) = \mb{0}, \label{eq:mnatd.a6}
    \end{align}
    which is uniquely solvable for $\iLt$ by \cref{prop:wp}.2, \cref{lem:ap}.3 and \cref{lem:ap}.2.

    Finally, we split \cref{eq:mnatd.2} by multiplying with $\VT[L]$ and $\WT[L]$ from the left to find
    \begin{align}
        \VT[L] \bigg( \dd{t} \phiL(\iL) - \AT[L] \vphi \bigg) &= \mb{0} \label{eq:mnatd.i6}\\
        \WT[L] \bigg( \dd{t} \phiL(\iL) - \AT[L] \vphi \bigg) &= \mb{0}. \label{eq:mnatd.d2}
    \end{align}
    Inserting the entire splitting of $\vphi$ into \cref{eq:mnatd.i6} gives
    \begin{alignat}{2}
        &\VT[L] \dd{t} \phiL(\iL) - \VT[L] \AT[L] \bigg( \mb{P}_\mr{V_s} \vphit[V_s] + \Q[V_s] \Big( \mb{P}_\mr{C} \vphit[C] &&\notag\\
        &\quad + \Q[C] \big( \mb{P}_\mr{V_c} \vphit[V_c] + \Q[V_c] \mb{P}_\mr{R} \vphit[R] \big) \Big) \bigg) &&\notag\\
        &\quad - \VT[L] \AT[L] \Q[V_s C V_c R] \vphib[R] &&= \mb{0}, \label{eq:mnatd.a7}
    \end{alignat}
    which can be seen to be uniquely solvable for $\vphib[R]$ by the same reasoning as before.

    \item \emph{Identifying the semi-explicit index one DAE.}
    We can now identify and collect the differential and algebraic parts corresponding to \cref{def:seio}. For the differential part, we take \cref{eq:mnatd.d1} and \cref{eq:mnatd.d2}, insert the variable splittings for $\vphi$ and $\iL$ at key locations and resolve the total time derivatives to find
    \begin{subequations}
        \label{eq:mnatdd}
        \begin{alignat}{2}
            &\PT[C] \QT[V_s] \bigg( \A[C] \mb{C} \big( \AT[C] \vphi \big) \AT[C] \Q[V_s] \mb{P}_\mr{C} \dd{t} \vphit[C] &&\notag\\
            &\quad + \A[C] \mb{C} \big( \AT[C] \vphi \big) \AT[C] \mb{P}_\mr{V_s} \dd{t} \vphit[V_s] &&\notag\\
            &\quad + \A[R] \gR \big( \AT[R] \vphi, \AT[C V] \vphi, \iL, t \big) &&\notag\\
            &\quad + \A[L] \iL + \A[V_c] \iVc + \A[I_s] \is(t) \bigg) &&\notag\\
            &\quad + \PT[C] \QT[V_s] \A[I_c] \ic \big( \AT[C R V] \vphi, \iL, \iVc, t \big) &&= \mb{0} \label{eq:mnatdd1}\\
            &\WT[L] \bigg( \mb{L}(\iL) \W[L] \dd{t} \iLb + \mb{L}(\iL) \V[L] \dd{t} \iLt - \AT[L] \vphi \bigg) &&= \mb{0}. \label{eq:mnatdd2}
        \end{alignat}
    \end{subequations}
    Here, $\mb{C}$ and $\mb{L}$ are defined as in \cref{as:ej} and we emphasize that both $\vphit[V_s]$ and $\iLt$ are purely time-dependent functions depending only on independent voltage and current sources, by \cref{eq:mnatd.a1} and \cref{eq:mnatd.a6} respectively. We can therefore bring \cref{eq:mnatdd} to the form of \cref{eq:seio.1} in \cref{def:seio}, with $\mb{x}^\T = [\vphit[C]^\T, \iLb^\top]$ as the differential variables, since the leading expressions (in front of time derivatives involving differential variables) are positive definite by \cref{lem:ap}.1, \cref{def:bf}, \cref{as:ej} and \cref{lem:ap}.4. Furthermore, we remark that all expressions remain sparse and that they can be assembled efficiently due to the topological nature of the basis matrices, see the proof of \cref{prop:tbf} later.

    Before we identify the algebraic variables $\mb{y}$ from \cref{def:seio}, we note that neither $\iVs$ nor $\vphib[R]$ appear in the differential part\footnote{In \cref{eq:mnatdd1} this is true as $\AT[L I] \vphi$ does not appear and in \cref{eq:mnatdd2} this is guaranteed by the definition of $\W[L]$.} \cref{eq:mnatdd}. In fact, these two variables form a third set $\mb{z}^\T = [\iVs^\T, \vphib[R]^\T]$, in the following referred to as \emph{output} variables, since they influence neither the differential nor the algebraic variables but can instead be recovered from them. Therefore, we can deal with the corresponding equations \cref{eq:mnatd.a2} and \cref{eq:mnatd.a7} separately. Written as one system\footnote{The extra arguments $\mb{x}'$ and $\mb{y}'$ indicate that $\mb{h}$ may also depend on the time derivatives of the differential and algebraic variables.} $\mb{h}(\mb{x}', \mb{y}', \mb{x}, \mb{y}, \mb{z}, t) = \mb{0}$, the corresponding Jacobian w.r.t.\ $\mb{z}$ reads
    \begin{align*}
        \pp[\mb{h}]{\mb{z}} = \begin{bmatrix}
            \PT[V_s] \A[V_s] & \\
            & -\VT[L] \AT[L] \Q[V_s C V_c R]
        \end{bmatrix},
    \end{align*}
    since \cref{eq:mnatd.a2} does not depend on $\vphib[R]$ as $\AT[L I] \vphi$ does not appear. We recall that we already showed both diagonal blocks to be regular earlier, thus the Jacobian itself is also regular by \cref{lem:ap}.5 and hence $\mb{h}$ is uniquely solvable for $\mb{z}$.

    We can now identify the algebraic variables
    \begin{align*}
        \mb{y}^\T = [\vphit[V_s]^\T, \iLt^\top, \vphit[V_c]^\T, \vphit[R]^\T, \iVc^\T]
    \end{align*}
    and therefore the algebraic part $\mb{g}$ corresponding to \cref{eq:seio.2} in \cref{def:seio} is given by
    \begin{align}
        \mb{g}(\mb{x}, \mb{y}, t) = \begin{bmatrix}
            g_1(\vphit[V_s], t)\\
            g_2(\iLt, t)\\
            g_3(\vphit[V_c], \cdot, t)\\
            g_4(\vphit[R], \cdot, t)\\
            g_5(\iVc, \cdot, t)\\
        \end{bmatrix} = \mb{0}, \label{eq:mnatda}
    \end{align}
    where we only list key dependencies for improved readability and replace the remaining ones using $\cdot$ similar to before. We emphasize that $\mb{g}$ is not allowed to depend on the output variables $\mb{z}$ and indeed, this can easily be verified as neither $\iVs$ nor $\AT[L I] \vphi$ appear in any of the functions, cf.\ \cref{eq:mnatd.a1,eq:mnatd.a6,eq:mnatd.a3,eq:mnatd.a5,eq:mnatd.a4}. The Jacobian of $\mb{g}$ w.r.t.\ $\mb{y}$ is given by
    \begin{align*}
        \pp[\mb{g}]{\mb{y}} &= \begin{bmatrix}
            \mb{A}_1 & & & & \\
            & \mb{A}_2 & & & \\
            \bullet & \bullet & \mb{A}_3 & & \\
            \bullet & \bullet & \bullet & \mb{A}_4 & \\
            \bullet & \bullet & \bullet & \bullet & \mb{A}_5
        \end{bmatrix},
    \end{align*}
    where the individual matrix blocks are defined as follows
    \begin{align*}
        \mb{A}_1 &\coloneqq \AT[V_s] \mb{P}_\mr{V_s}\\
        \mb{A}_2 &\coloneqq \QT[V_s C V_c R] \A[L] \V[L]\\
        \mb{A}_3 &\coloneqq \AT[V_c] \Q[V_s C] \mb{P}_\mr{V_c}\\
        \mb{A}_4 &\coloneqq \PT[R] \QT[V_s C V_c] \A[R] \mb{G} \big( \AT[R] \vphi, \AT[C V] \vphi, \iL, t \big) \AT[R] \Q[V_s C V_c] \mb{P}_\mr{R}\\
        \mb{A}_5 &\coloneqq \PT[V_c] \QT[V_s C] \A[V_c].
    \end{align*}
    Here, $\mb{G}$ is defined analogous to $\mb{C}$ and $\mb{L}$ before, with $\mb{x} = \AT[R] \vphi$ when comparing with \cref{as:ej}. $\mb{A}_4$ can then be shown to be regular in the same way as the expression containing $\mb{C}$ earlier. Since the remaining diagonal blocks were already proved to be regular, \cref{lem:ap}.5 yields that $\mb{g}$ is uniquely solvable for $\mb{y}$, which completes the proof.

    As a final remark, we note that the first three equations can be solved independently of the last two and that they each only require the solution of one linear system. Furthermore, we again emphasize that all products involving incidence and basis matrices remain sparse and that they can be assembled efficiently, compare also the proof of \cref{prop:tbf}.
\end{proof}

The following proposition ensures that the decoupling can be computed using only topological information about the circuit.

\begin{proposition}[Topological basis matrices]
    \label{prop:tbf}
    All basis matrices involved in the decoupling can be determined topologically, i.e.\ by only looking at the graph of the circuit.
\end{proposition}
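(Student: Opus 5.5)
Each basis matrix in the decoupling is a basis matrix of a product $\AT[X] \Q[\dots]$, where $\A[X]$ is an incidence matrix and the $\Q$'s are earlier basis matrices. The plan is to show, by induction along the order in which the splittings are introduced ($\mr{V_s}$, $\mr{C}$, $\mr{V_c}$, $\mr{R}$, then $\V[L],\W[L]$), that every $\Q$ factor can be chosen as a $0/1$ matrix given by connected components. The invariant is that each product $\Q[V_s]$, $\Q[V_s C]$, $\Q[V_s C V_c]$, $\Q[V_s C V_c R]$ is such a component matrix.

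\emph{Base step.} The kernel of $\AT[V_s]$ consists of potentials $\vphi$ that are constant on each connected component of the subgraph formed by the $\mr{V_s}$ branches, with components containing the ground node forced to zero. We therefore take $\Q[V_s]$ to have one column per component not containing ground, with entry $1$ at the nodes of that component. For $\mb{P}_\mr{V_s}$ we choose unit vectors: drop one representative node per such component, and drop all nodes of the grounded component. Then $[\mb{P}_\mr{V_s}, \Q[V_s]]$ is a basis.

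\emph{Inductive step.} Note that $\AT[C]\Q[V_s]$ is, up to sign, the incidence matrix of the capacitor branches in the contracted graph, where each $\mr{V_s}$ component is merged into a single node (ground-containing ones into ground). Branches that become self-loops give zero rows. So $\AT[C]\Q[V_s]$ is again an (possibly reduced) incidence matrix, and the base step applies to it. This gives $\Q[C]$ and $\mb{P}_\mr{C}$ as component and unit-vector matrices of the contracted graph. Composing, $\Q[V_s C]$ is the component indicator of the $\mr{C}$-$\mr{V_s}$ subgraph, and it is again a $0/1$ matrix. Iterating this with $\mr{V_c}$ and then $\mr{R}$ yields all $\mb{P}$'s and $\Q$'s through successive graph contractions. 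In practice this can be done with union–find or depth-first search.

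\emph{Inductors: the main obstacle.} The last step handles $\V[L],\W[L]$ for $\AT[L]\Q[V_s C V_c R]$. Here the kernel to describe is $\ker(\QT[V_s C V_c R]\A[L])$ on the branch side. This matrix is the incidence matrix of the inductors in the fully contracted graph, so its kernel is the cycle space of that graph. We would therefore take the columns of $\W[L]$ to be the signed indicator vectors of fundamental loops with respect to a spanning forest. Each loop closes with exactly one chord inductor, so these vectors are independent and span the cycle space. For $\V[L]$ we take unit vectors for the forest branches; together with $\W[L]$ this gives a basis of $\mathbb{R}^{b}$, since each $\W[L]$ column has a single nonzero entry in the chord positions.

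The technical care needed is mainly:
\begin{itemize}
\item checking that contracted incidence matrices with reduced rows (ground handling) and zero rows from self-loops still satisfy the component description;
\item verifying that the $\mb{P}$ choices are complementary at each stage.
\end{itemize}
Both reduce to the standard fact that $\ker \AT$ of a reduced incidence matrix corresponds to components not containing ground, and $\ker \A$ to the cycle space. Since every object depends only on the graph and element types, the proposition follows. Sparsity is also preserved, because all matrices are $0/\pm1$ with disjoint or forest-based supports.
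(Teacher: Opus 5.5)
Your construction is essentially the paper's, with one concrete error in the rule for $\mb{P}$. The shared parts are these: component-indicator matrices for the $\Q$'s; identifying $\AT[C]\Q[V_s]$ and its successors with the transposed (reduced) incidence matrix of a contracted graph, where self-loops give zero rows; and fundamental loops of a spanning forest of the fully contracted graph for $\W[L]$, with forest unit vectors for $\V[L]$. Your invariant that $\Q[V_s C]$, $\Q[V_s C V_c]$, $\Q[V_s C V_c R]$ are themselves component indicators of the cumulative subgraphs is a clean way to state the paper's repeated contraction. Inductors that become self-loops must be treated as their own fundamental cycles, which you should say explicitly; this recovers the paper's identity block in $\W[L]$.

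The error is in your rule for $\mb{P}$, which you reuse at every stage: the nodes of the grounded component must be \emph{kept}, not dropped. Since $\Q[V_s]$ vanishes on the grounded component, $[\mb{P}_\mr{V_s}, \Q[V_s]]$ can only span $\mathbb{R}^n$ if $\mb{P}_\mr{V_s}$ contains a unit vector for every non-ground node of that component. This is the block $\mb{I}_{|\mr{c}_c|}$ in the paper's construction.

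A count confirms this. The $\mr{V_s}$ branches form a forest, so $\AT[V_s]\mb{P}_\mr{V_s}$ must be square of order $b_{\mr{V_s}} = \sum_{i<c}(|\mr{c}_i|-1) + |\mr{c}_c|$. Your $\mb{P}_\mr{V_s}$ has only $\sum_{i<c}(|\mr{c}_i|-1)$ columns, so $g_1 = \mb{0}$ becomes overdetermined.

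The buck converter makes the failure concrete. Your rule makes $\mb{P}_\mr{V_s}$, $\mb{P}_\mr{C}$ and $\mb{P}_\mr{R}$ all empty, instead of $[1,0,0]^\T$, $[0,1]^\T$ and $[1]$. Then neither the source-pinned potential $\varphi_1$ nor the capacitor potential $\varphi_3$ (the differential variable) can be represented.

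The complementarity check you defer as ``standard'' is exactly the step that fails. With ``keep'' in place of ``drop'', the argument goes through as in the paper.
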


Before proving \cref{prop:tbf}, we note the following (well-known) properties of incidence matrices of spanning trees.

\begin{lemma}[Incidence matrix of a spanning tree]
    \label{lem:imst}
    ~\begin{enumerate}
        \item The unreduced incidence matrix of a spanning tree has full column rank.
        \item A reduced incidence matrix of a spanning tree is regular.
    \end{enumerate}
\end{lemma}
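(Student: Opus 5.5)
The statement concerns the last displayed result, \cref{lem:imst}. I would prove both parts with a standard counting argument on the spanning tree, by induction on the number of nodes. Let the spanning tree have $n$ nodes and $n-1$ branches, so its unreduced incidence matrix $\A$ is $n \times (n-1)$.

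For part 1, I would show $\ker \A = \{\mb{0}\}$ directly. Suppose $\A \mb{x} = \mb{0}$ for some $\mb{x} \in \mathbb{R}^{n-1}$. Since $n \geq 2$ and a finite tree has at least two leaves, pick a leaf node $i$. By \cref{def:im}, row $i$ of $\A$ contains exactly one nonzero entry $\pm 1$, located in the column of the unique branch $j$ incident to $i$. The $i$-th equation then gives $x_j = 0$.

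Next, delete node $i$ and branch $j$. The remaining graph is again a tree, on $n-1$ nodes. The remaining equations are exactly $\A' \mb{x}' = \mb{0}$, where $\A'$ is the unreduced incidence matrix of the smaller tree and $\mb{x}'$ is $\mb{x}$ with entry $j$ removed. Column $j$ drops out of every equation because $x_j = 0$. By induction $\mb{x}' = \mb{0}$, and the base case is a single edge with $\A = [1, -1]^\T$. Hence $\mb{x} = \mb{0}$ and $\A$ has full column rank $n-1$.

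For part 2, the reduced matrix is square of size $(n-1) \times (n-1)$, so it suffices to show it still has rank $n-1$. The key fact is that every column of the unreduced $\A$ has exactly one $+1$ and one $-1$. Hence $\mb{1}^\T \A = \mb{0}$, so any single row is minus the sum of the others. Deleting the row of the ground node therefore does not change the row space, and the rank stays $n-1$. Since the matrix is square, it is regular.

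Alternatively, part 2 can be proved by the same leaf-peeling induction, always choosing a leaf other than the ground node; a tree with at least two nodes has at least two leaves, so such a leaf exists.

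I expect no real obstacle. The only points needing care are the existence of a leaf and that removing a leaf keeps a tree, both standard facts (a tree with $n \ge 2$ nodes has $\ge 2$ leaves, by counting degrees against $n-1$ edges).
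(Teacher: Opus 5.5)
Your proof is correct. The paper gives no argument of its own and simply cites \cite[Lemmas 2.2 and 2.7]{bapat2014}, so your version is self-contained where the paper's is not.

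For part 2, your reasoning is essentially the standard one. The rows of the unreduced matrix sum to zero, so deleting the ground row leaves the row space, and hence the rank $n-1$, unchanged.

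For part 1 you take a different route from the usual textbook argument. That argument works on the left kernel: $\mathbf{y}^\top \mathbf{A} = \mathbf{0}$ forces $y_i = y_k$ across every branch, so by connectivity $\mathbf{y}$ is a multiple of $\mathbf{1}$. This gives $\operatorname{rank} \mathbf{A} = n-1$ for every connected graph, which for a tree with its $n-1$ branches is full column rank. Your leaf-peeling instead works on the right kernel and uses acyclicity (existence of leaves) rather than connectivity.

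Each route buys something different. The left-kernel route shows that the left kernel consists of vectors constant on each connected component. That is exactly the structure of $\mathbf{Q}_\mathrm{E}$ in \cref{eq:QE}. Your route shows that the branches of any forest give independent columns. Read as an elimination order, it also permutes the reduced tree incidence matrix into lower triangular form with $\pm 1$ on the diagonal. So you additionally get $\det = \pm 1$ and a linear-time substitution scheme, which fits the paper's emphasis on sparse, efficiently computable topological operations.
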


\begin{proof}
    \label{pr:imst}
    See e.g.\ \cite[Lemma 2.2 and Lemma 2.7]{bapat2014}.
\end{proof}

\begin{proof}[Proof of \cref{prop:tbf}]
    \label{pr:tbf}
    We begin the proof by noting that there are three different kinds of structures for which we need to determine basis matrices. The first kind are basis matrices of incidence matrices, as e.g.\ $\mb{P}_\mr{V_s}$ and $\Q[V_s]$ of $\AT[V_s]$. The second kind are basis matrices of products of basis and incidence matrices, as e.g.\ $\mb{P}_\mr{C}$ and $\Q[C]$ of $\AT[C] \Q[V_s]$, but also all subsequent basis matrices required for splitting the nodal potentials fall into this category. The third and last kind are basis matrices of the transpose of products of basis and incidence matrices, as needed for $\V[L]$ and $\W[L]$ of $\AT[L] \Q[V_s C V_c R]$ in the final step of the decoupling. In the following, we provide topological constructions and interpretations for each of these structures.

    \item \emph{Basis matrices of incidence matrices.} We first consider a reduced incidence matrix $\A[E] \in \mathbb{R}^{n_\mr{E} \times b_\mr{E}}$ corresponding to an arbitrary element type $\mr{E}$. In order to determine $\Q[E]$, we need to find a basis for $\ker \AT[E]$, compare \cref{def:bf}. For this, we note that the transpose of the unreduced incidence matrix of an arbitrary circuit fulfilling \cref{as:wp} can be written as
    \begin{align}
        \AT = \begin{bNiceArray}{c}
                \AT[t]\\
                \cmidrule(lr){1-1}
                \AT[\ell]
            \end{bNiceArray}, \label{eq:acim}
    \end{align}
    where the upper part describes a spanning tree and the lower part the branches leading to fundamental loops, in the following simply referred to as the fundamental loops. We further note that the nonzero entries in each row of $\AT$ must sum to zero by \cref{def:im} and therefore, a basis $\Q$ for $\ker \AT$ is given by $\Q = \mb{1}_n$, with $\AT \in \mathbb{R}^{b \times n}$ and $\mb{1}_n$ a vector of $n$ ones, as $\ker \AT$ is one-dimensional by \cref{lem:imst}.1. For a reduced incidence matrix $\A'$, we instead find the trivial kernel, since the corresponding spanning tree is regular by \cref{lem:imst}.2.

    Returning to $\A[E]$ and summarizing, we can consider all the connected components ($\mr{c}_1$ through $\mr{c}_c$) of $\A[E]$ individually to end up with
    \begin{align}
        \Q[E] = \begin{bmatrix}
            \mb{1}_{|\mr{c}_1|} & & & \\
            & \mb{1}_{|\mr{c}_2|} & & \\
            & & \ddots & \\
            & & & \mb{1}_{|\mr{c}_{c-1}|}\\
            & & & \mb{0}_{|\mr{c}_c|}
        \end{bmatrix}, \label{eq:QE}
    \end{align}
    where we reordered the nodes according to the $c$ connected components of $\A[E]$ and defined the last component to contain the ground node, compare also \cite[Section 7.1.1]{jansen2014}. (Note that each node not incident to an element of type $\mr{E}$ corresponds to an individual component.)

    In order to describe $\PE$, we simply need one identity matrix of appropriate size per connected component to complete $\Q[E]$ to a basis of $\mathbb{R}^{n_\mr{E}}$. This yields
    \begin{align}
        \PE = \begin{bmatrix}
            \mb{I}_{|\mr{c}_1|-1} & & & \\[0.3ex]
            \mb{0}_{|\mr{c}_1|-1}^\T & & & \\
            & \mb{I}_{|\mr{c}_2|-1} & & \\[0.3ex]
            & \mb{0}_{|\mr{c}_2|-1}^\T & & \\
            & & \ddots & \\
            & & & \mb{I}_{|\mr{c}_c|}
        \end{bmatrix}, \label{eq:PE}
    \end{align}
    where $\mb{I}_n$ denotes an identity matrix of size $n$ and matrices of size zero are ignored. (The latter occur for each node not incident to an element of type $\mr{E}$.)

    \item \emph{Basis matrices of products.} We begin by determining the structure of a product $\ATE[2] \QE[1]$, for $\mr{E}_1$, $\mr{E}_2$ two different element types and distinguish the following three cases:
    \begin{enumerate}
        \item If both nodes of a branch (column) $\mb{b} \in \mathbb{R}^{n_{\mr{E}_2}}$ of $\AE[2]$ are in the same component of $\AE[1]$, then $\mb{b}^\T \QE[1] = \mb{0}_{q_{\mr{E}_1}}^\T$, where $\QE[1] \in \mathbb{R}^{n_{\mr{E}_1} \times q_{\mr{E}_1}}$.

        \item If one of the nodes of $\mb{b}$ is in the ground component of $\AE[1]$ while the other is not, we have $\mb{b}^\T \QE[1] = (\mb{b}')^\T$, with $\mb{b}'$ containing only one nonzero entry corresponding to the non-ground component.

        \item If the nodes are in two different non-ground components of $\AE[1]$, we again find $\mb{b}^\T \QE[1] = (\mb{b}')^\T$, however this time $\mb{b}'$ contains two nonzero entries corresponding to the respective components of $\AE[1]$.
    \end{enumerate}
    If we order the branches of $\AE[2]$ according to these cases, we can rewrite the product as
    \begin{align}
        \ATE[2] \QE[1] = \begin{bmatrix}
            \mb{B}_1^\T\\
            \cmidrule(lr){1-1}
            \mb{B}_2^\T\\[0.3ex]
            \mb{B}_3^\T
        \end{bmatrix} \QE[1] = \begin{bNiceArray}{c}
                \mb{0}\\
                \cmidrule(lr){1-1}
                (\AE[2]')^\T
            \end{bNiceArray}, \label{eq:bfp}
    \end{align}
    where $\AE[2]'$ corresponds to the graph obtained from contracting $\AE[2]$ by the branches of $\AE[1]$. Here, contracting a graph is defined as identifying the nodes of each branch that is contracted. This topological interpretation therefore corresponds to cases 2 and 3 ($\mb{B}_2$ and $\mb{B}_3$) above, as each component of $\AE[1]$ is contracted into a single node. Case 1 corresponds to branches that are contracted into self-loops and since self-loops are not well-defined in the context of circuits, compare \cref{def:im,as:wp}, they end up leading to zero rows. \Cref{fig:contracting_branches} shows an example illustrating this topological correspondence.

    \begin{figure*}
        \begin{center}
            \includegraphics[width=0.66\textwidth]{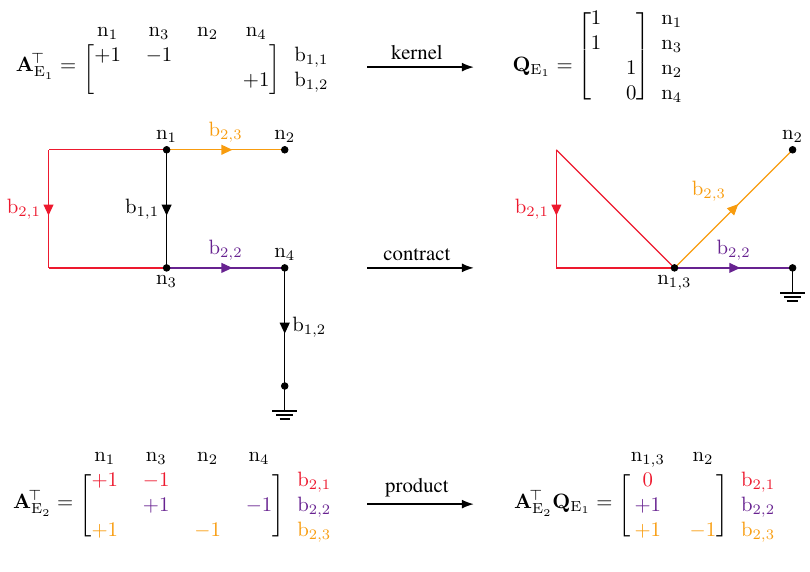}
        \end{center}
        \caption{Example illustrating branch contraction as in \cref{eq:bfp}, where the branches $\mr{b}_{2,i}$ with $1 \leq i \leq 3$ belong to case $i$ in the proof, respectively. The nodes are ordered according to the connected components of $\AE[1]$, to comply with the notation from \cref{eq:QE,eq:PE}.}
        \label{fig:contracting_branches}
    \end{figure*}

    \begin{figure*}[b]
        \begin{center}
            \includegraphics[width=0.69\textwidth]{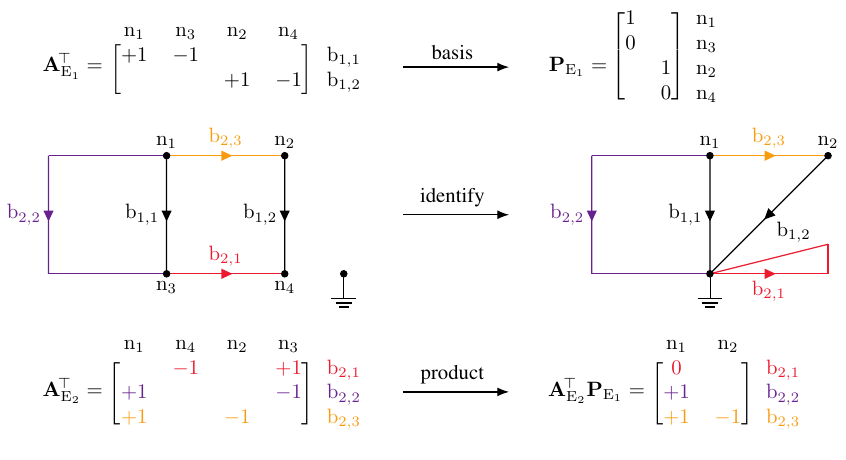}
        \end{center}
        \caption{Example illustrating (ground) node identification, where the branches $\mr{b}_{2,i}$ with $1 \leq i \leq 3$ again belong to case $i$ in the proof, respectively. The nodes are also again ordered to comply with the notation from \cref{eq:PE}.}
        \label{fig:identifying_nodes}
    \end{figure*}

    Moving on, we can now identify a basis matrix $\QE[2]$ of $\ATE[2] \QE[1]$. By noting the specific structure of \cref{eq:bfp}, we can choose $\QE[2]$ as a basis matrix of $(\AE[2]')^\T$, as this yields
    \begin{align*}
        \ATE[2] \QE[1] \QE[2] = \begin{bNiceArray}{c}
                \mb{0}\\
                \cmidrule(lr){1-1}
                (\AE[2]')^\T
            \end{bNiceArray} \QE[2] = \mb{0}.
    \end{align*}
    Since $\AE[2]'$ is the incidence matrix of a contracted graph, we can further determine $\QE[2]$ in the same topological manner as explained earlier.

    Lastly, we note that we can again reuse the case distinction to also identify the structure of $\ATE[3] \QE[1] \QE[2]$, for $\mr{E}_3$ a third element type. We immediately see that
    \begin{align*}
        \ATE[3] \QE[1] \QE[2] = \begin{bNiceArray}{c}
                \mb{0}\\
                \cmidrule(lr){1-1}
                (\AE[3]')^\T
            \end{bNiceArray} \QE[2]
    \end{align*}
    and thus, we can apply the same reasoning as before to $(\AE[3]')^\T \QE[2]$, i.e.\ to the already contracted graph $\AE[3]'$. As this leaves the general structure from \cref{eq:bfp} unchanged, all products of basis and incidence matrices of this kind can be handled this way.

    Before characterizing transposed products, we briefly look at the structure of $\ATE[2] \PE[1]$. This is e.g.\ necessary for efficiently assembling the final systems as mentioned in the proof of \cref{thm:dec}. We distinguish three cases:
    \begin{enumerate}
        \item If both nodes of a branch $\mb{b}$ of $\AE[2]$ correspond to last nodes (in the sense of the ordering of \cref{eq:PE}) of non-ground components of $\AE[1]$, then $\mb{b}^\T \PE[1] = \mb{0}_{p_{\mr{E}_1}}^\T$, where $\PE[1] \in \mathbb{R}^{n_{\mr{E}_1} \times p_{\mr{E}_1}}$.

        \item If one of the nodes of $\mb{b}$ corresponds to the last node of a non-ground component of $\AE[1]$ while the other does not, we have $\mb{b}^\T \PE[1] = (\mb{b}')^\T$, with $\mb{b}'$ containing only one nonzero entry corresponding to the latter node.

        \item If neither node corresponds to the last node of a non-ground component of $\AE[1]$, we again find $\mb{b}^\T \PE[1] = (\mb{b}')^\T$, however $\mb{b}'$ now contains two nonzero entries corresponding to the respective nodes.
    \end{enumerate}
    Overall, this gives rise to the same structure as in \cref{eq:bfp}
    \begin{align*}
        \ATE[2] \PE[1] = \begin{bNiceArray}{c}
                \mb{0}\\
                \cmidrule(lr){1-1}
                (\AE[2]')^\T
            \end{bNiceArray},
    \end{align*}
    where this time $\AE[2]'$ corresponds to the graph obtained by identifying all last nodes of non-ground components of $\AE[1]$ with the ground node. This topological correspondence is also illustrated by an example, see \cref{fig:identifying_nodes}.

    \begin{figure*}[b]
        \begin{center}
            \includegraphics[width=0.66\textwidth]{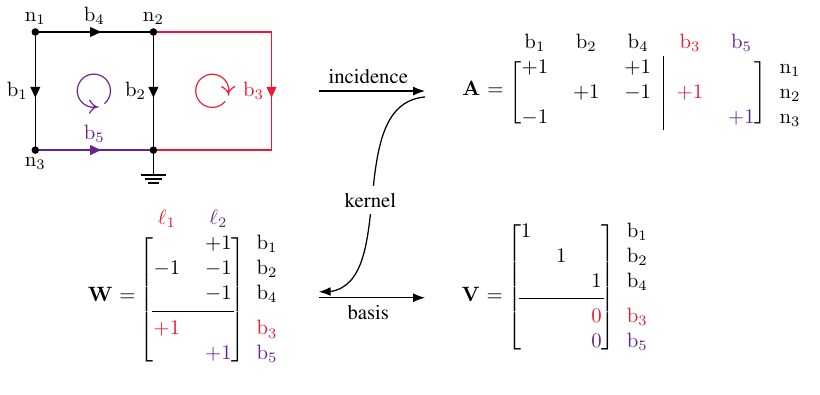}
        \end{center}
        \caption{Example illustrating the basis matrices $\V$ and $\W$ of an incidence matrix.}
        \label{fig:transposed_basis}
    \end{figure*}

    \item \emph{Basis matrices of transposed products.} In a first step, we determine a basis matrix $\W[E]$ of an incidence matrix $\A[E] \in \mathbb{R}^{n_\mr{E} \times b_\mr{E}}$ by looking at $\ker \A[E]$, compare again \cref{def:bf}. For this, we once more consider the structure from \cref{eq:acim} for the unreduced incidence matrix of an arbitrary circuit fulfilling \cref{as:wp}, $\A = \begin{bNiceArray}{c|c}
        \A[t] & \A[\ell]
    \end{bNiceArray}$. From \cref{lem:imst}.1, we already know that the kernel of $\A$ must have dimension $|\ell|$, where $|\ell|$ denotes the number of fundamental loops. Further noting that each column of $\A[\ell]$, i.e.\ each fundamental loop, connects exactly two nodes of the tree $\A[t]$ such that a (fundamental) loop is formed, we can identify the following definition for the entries of a basis $\W$ of $\A$
    \begin{align}
        w_{i,j} \coloneqq \begin{cases}
            +1, & \parbox{0.27\textwidth}{if branch $i$ and fundamental loop\\[-0.3ex] $j$ have the same orientation,}\\[2ex]
            -1, & \parbox{0.27\textwidth}{if branch $i$ and fundamental loop\\[-0.3ex] $j$ have opposite orientation,}\\[1.2ex]
            0, & \text{if branch $i$ is not part of loop $j$,}
        \end{cases} \label{eq:W}
    \end{align}
    compare again \cite[Section 7.1.1]{jansen2014}. Here, the loop direction is in principle arbitrary, but we choose the convention of fixing it based on the orientation of the fundamental loop. We verify that this definition truly ensures $\A \W = \mb{0}$, by noting that each node (that is part of a loop) appears exactly twice per loop, with the signs of the corresponding entries in $\W$ depending on the orientations of the respective branches in such a way, that their contributions sum to zero. (If the respective branches have the same orientation e.g., one branch will leave the node while the other will enter it, leading to a cancellation.) Furthermore, the columns of $\W$ are all linearly independent, as there is precisely one column containing a nonzero entry for each fundamental loop. Finally, a simple choice for $\V$ is given by
    \begin{align}
        \V = \begin{bNiceArray}{c}
            \mb{I}_{|t|}\\
            \cmidrule(lr){1-1}
            \mb{0}
        \end{bNiceArray}, \label{eq:V}
    \end{align}
    where $|t|$ denotes the size of the tree. The definitions of $\V$ and $\W$ are again best illustrated using an example, see \cref{fig:transposed_basis}.

    In the general case, where $\A[E]$ contains $c$ connected components, we can once again consider each component individually to end up with the following basis matrices
    \begin{align*}
        \W[E] = \begin{bmatrix}
            \Wc[1] & & \\
            & \ddots &\\
            & & \Wc[c]
        \end{bmatrix}, \quad \V[E] = \begin{bmatrix}
            \Vc[1] & & \\
            & \ddots &\\
            & & \Vc[c]
        \end{bmatrix},
    \end{align*}
    where we reordered the nodes according to the connected components, as in \cref{eq:QE}. We also note that the ground component requires no special treatment this time, as the derivation relies solely on the full column rank of the incidence matrix of a spanning tree, cf.\ \cref{lem:imst}. Furthermore, the same construction still works when considering basis matrices $\VE[2]$, $\WE[2]$ of products of the form\footnote{It suffices to only consider this case, as we already proved all products of interest to be of the same general form.} $\QTE[1] \AE[2]$, since
    \begin{align*}
        \QTE[1] \AE[2] \WE[2] = \begin{bNiceArray}{c|c}
            \mb{0} & \AE[2]'
        \end{bNiceArray} \WE[2]
    \end{align*}
    and hence we can choose
    \begin{align*}
        \WE[2] = \begin{bmatrix}
            \mb{I} & \\
            & \WE[2]'
        \end{bmatrix}, \quad \VE[2] = \begin{bNiceArray}{c}
            \mb{0}\\
            \cmidrule(lr){1-1}
            \VE[2]'
        \end{bNiceArray},
    \end{align*}
    where $\mb{I}$ is an identity matrix of appropriate size, $\WE[2]'$ is a basis matrix of the contracted graph $\AE[2]'$, compare \cref{eq:bfp}, and $\VE[2]'$ is a basis matrix corresponding to $\WE[2]'$.

    Finally, we conclude that all products of incidence and basis matrices can be represented topologically. They therefore remain sparse and efficient to assemble, fulfilling our promise from the proof of \cref{thm:dec}.
\end{proof}

The topological nature of the decoupling further lends itself to directly computing sets of differential and algebraic variables, as e.g.\ required for the approach described in \cite{forster2023c}.

\begin{corollary}[Topological splitting]
    \label{cor:ts}
    Using the topological basis matrices from \cref{prop:tbf}, we can explicitly determine sets of differential and algebraic variables by only looking at the graph of the circuit.
\end{corollary}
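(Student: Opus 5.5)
The corollary follows by reading off, in the topological terms of \cref{prop:tbf}, which coordinates the proof of \cref{thm:dec} designates as differential and which as algebraic. The differential variables are $\mb{x}^\T = [\vphit[C]^\T, \iLb^\T]$ and the algebraic ones are $\mb{y}^\T = [\vphit[V_s]^\T, \iLt^\T, \vphit[V_c]^\T, \vphit[R]^\T, \iVc^\T]$, with $\mb{z}$ as outputs. Each block is defined by a splitting $\vphi = \mb{P}_\mr{V_s}\vphit[V_s] + \Q[V_s](\mb{P}_\mr{C}\vphit[C] + \Q[C](\dotsb))$ or $\iL = \V[L]\iLt + \W[L]\iLb$. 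It therefore suffices to show that the column spaces of the composed bases $\mb{P}_\mr{V_s}$, $\Q[V_s]\mb{P}_\mr{C}$, $\Q[V_s C]\mb{P}_\mr{V_c}$, $\Q[V_s C V_c]\mb{P}_\mr{R}$, $\Q[V_s C V_c R]$, $\V[L]$ and $\W[L]$ correspond to explicitly identifiable sets of nodes or node groups and inductor branches, obtained by graph operations alone.

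For the potentials I would proceed iteratively along the chain $\mr{V_s}, \mr{C}, \mr{V_c}, \mr{R}$. By \cref{eq:QE,eq:PE}, $\mb{P}_\mr{V_s}$ selects all nodes of each $\mr{V_s}$-component except one designated representative, and all nodes of the ground component. $\Q[V_s]$ maps each non-ground component to one contracted node. In the contracted graph, by \cref{eq:bfp}, I apply the same construction to the capacitor branches, which yields $\mb{P}_\mr{C}$ and $\Q[C]$. I continue in the same way with $\mr{V_c}$ and $\mr{R}$. At every stage, $\Q[\cdot]$ is a $0/1$ matrix that assigns original nodes to contracted supernodes, so the products $\Q[V_s]\mb{P}_\mr{C}$ and so on remain $0/1$ matrices whose columns are indicator vectors of sets of original nodes. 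Concretely, each variable in $\vphit[C]$ is the common potential of a group of nodes merged by $\mr{V_s}$-contraction, namely a non-representative node of a $\mr{C}$-component of the contracted graph. The same description applies to $\vphit[V_c]$ and $\vphit[R]$, while $\vphib[R]$ corresponds to the remaining representatives after contraction by $\mr{V_s}$, $\mr{C}$, $\mr{V_c}$ and $\mr{R}$ branches.

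For the inductor currents, I use the last part of the proof of \cref{prop:tbf}. In the graph contracted by $\mr{V_s}$, $\mr{C}$, $\mr{V_c}$ and $\mr{R}$, the inductors that collapse to self-loops (case 1) receive identity columns in $\W[L]$. Among the remaining inductors, I choose a spanning forest. Its branches are selected by $\V[L]$ as in \cref{eq:V}, so they form $\iLt$, and the chords contribute fundamental loop vectors \cref{eq:W} to $\W[L]$, so they form $\iLb$. The algebraic variables $\iVc$ and the output $\iVs$ are simply the currents of the respective source branches.

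The main obstacle is bookkeeping: one must check that the composition of several $\mb{P}$ and $\Q$ matrices still has the clean indicator structure claimed. In particular, the ordering and ground conventions of \cref{eq:QE,eq:PE} must carry over consistently from one contracted graph to the next. I would handle this by induction on the number of contractions, using the case analysis preceding \cref{eq:bfp}. The remaining step is to note that the choices of representatives and of the spanning forest are purely graph-theoretic, which gives the corollary.
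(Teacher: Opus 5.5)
You have a genuine gap: describing the column spaces of $\mb{P}_\mr{V_s}$, $\Q[V_s]\mb{P}_\mr{C}$, $\dotsc$, $\V[L]$, $\W[L]$ is not enough. Those columns only give the map from the split variables back to $\vphi$ and $\iL$. The differential and algebraic variables are the coordinates with respect to the whole basis, i.e.\ they come from inverting $[\mb{P}, \Q]$ and $[\V, \W]$. A coordinate cannot be read off from its own basis column, and this is where your explicit identifications fail.

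Take a non-ground component with representative (last) node $n$. By \cref{eq:QE,eq:PE}, $\vphi = \begin{bmatrix} \mb{I}_{n-1} \\ \mb{0}_{n-1}^\T \end{bmatrix} \vphit + \mb{1}_n \vphib$. The zero row of $\mb{P}$ forces $\vphib = \varphi_n$, and then $\vphit = [\varphi_1 - \varphi_n, \dotsc, \varphi_{n-1} - \varphi_n]^\T$. So the $\mb{P}$-coordinates are potential differences to the representative node, and they are plain potentials only in the ground component. Your reading of the $\Q$-coordinates (such as $\vphib[R]$) as representative potentials is right. Your reading of $\vphit[C]$, $\vphit[V_c]$ and $\vphit[R]$ as potentials is not.

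A concrete counterexample: a single capacitor between non-ground nodes $1$ and $2$, with no voltage sources and node $2$ ordered last. Then $\Q[C] = \mb{1}_2$, $\mb{P}_\mr{C} = [1, 0]^\T$, and $\vphit[C] = \varphi_1 - \varphi_2$, the capacitor voltage, whereas your description yields $\varphi_1$. Likewise, nodes merged by $\mr{V_s}$-contraction have no common potential; the coordinate of such a supernode in $\vphib[V_s]$ is the potential of its representative node.

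The same problem affects the inductor currents. With $\V = \begin{bmatrix} \mb{I} \\ \mb{0} \end{bmatrix}$ from \cref{eq:V} and $\W = \begin{bmatrix} \mb{B} \\ \mb{I} \end{bmatrix}$ from \cref{eq:W} (tree rows first), inversion gives $\iLb$ equal to the chord currents, plus the currents of inductors collapsing to self-loops. That part of your claim is correct. However, $\iLt = \mb{i}_t - \mb{B}\, \iLb$, where $\mb{i}_t$ are the tree-branch currents, so $\iLt$ is not $\mb{i}_t$ itself. For two equally oriented inductors in parallel between the same pair of contracted nodes, $\iLt = i_1 + i_2$, the cutset current fixed by \cref{eq:mnatd.a6}, and not $i_1$.

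The missing step is the component-wise inversion of $[\mb{P}, \Q]$ and $[\V, \W]$, which is exactly what the paper's proof carries out. It also removes the bookkeeping obstacle you anticipate with composed $0/1$ matrices. Because the entries of $\vphib$ are genuine potentials of representative nodes, each subsequent splitting acts on actual node potentials of the contracted graph, and the same rule applies recursively.

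Your assignment of the blocks to $\mb{x}$, $\mb{y}$ and $\mb{z}$ via \cref{thm:dec}, and your topological description of the basis columns, are fine.
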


\begin{proof}
    We begin by considering a generic splitting of the nodal potentials $\vphi = \mb{P} \vphit + \Q \vphib$ with $\vphi \in \mathbb{R}^n$. Following a similar strategy as in the proof of \cref{prop:tbf}, we first look at a single non-ground component and find
    \begin{align*}
        \vphi = \begin{bmatrix}
            \mb{I}_{n-1}\\
            \mb{0}_{n-1}^\T
        \end{bmatrix} \vphit + \mb{1}_n \vphib,
    \end{align*}
    compare also \cref{eq:QE,eq:PE}. This implies that we can choose the algebraic variables as $\vphib = \varphi_n$, while the differential variables are then given by
    \begin{align}
        \vphit = \begin{bmatrix}
            \varphi_1 - \varphi_n\\
            \vdots\\
            \varphi_{n-1} - \varphi_n
        \end{bmatrix}. \label{eq:tsphi}
    \end{align}
    In general, the algebraic variable can be chosen as the potential corresponding to the last node, given the respective ordering of the component.

    When considering arbitrarily many components, $\vphib$ contains the potentials corresponding to the last nodes of each non-ground component, while $\vphit$ contains differences as in \cref{eq:tsphi} for the non-ground components and simply all potentials of the ground component, compare again \cref{eq:PE}. This also works recursively, such that $\vphib$ may be split further using the same considerations, as it only contains the potentials corresponding to individual nodes.

    \item We now look at a generic current splitting $\mb{i} = \V \mbt{i} + \W \mbb{i}$. For a single connected component, \cref{eq:V} yields
    \begin{align*}
        \mb{i} = \begin{bNiceArray}{c}
            \mb{I}_{|t|}\\
            \cmidrule(lr){1-1}
            \mb{0}
        \end{bNiceArray} \mbt{i} + \W \mbb{i},
    \end{align*}
    where we note that all fundamental loops correspond to zero entries in $\V$ by construction. Furthermore, we can identify the following general structure for $\W$ from \cref{eq:W}, compare also the example of \cref{fig:transposed_basis},
    \begin{align*}
        \W =
        \begin{bNiceArray}{c}
            \bullet\\
            \cmidrule(lr){1-1}
            \mb{I}_{|\ell|}
        \end{bNiceArray}.
    \end{align*}
    Hence we can choose the algebraic variables as
    \begin{align*}
        \mbb{i} = \begin{bmatrix}
            i_{|t| + 1}\\
            \vdots\\
            i_{|t| + |\ell|}
        \end{bmatrix}.
    \end{align*}
    For the differential variables, we then find
    \begin{align*}
        \mbt{i} = \begin{bmatrix}
            i_1\\
            \vdots\\
            i_{|t|}
        \end{bmatrix} - \W \mbb{i},
    \end{align*}
    which can be efficiently evaluated using \cref{eq:W}.

    When dealing with arbitrarily many components, we can simply apply the same reasoning for each component separately, i.e.\ $\mbb{i}$ will contain all the fundamental loop currents, while $\mbt{i}$ will consist of differences between the tree currents and (local) fundamental loop contributions.
\end{proof}

Using the notation and insights derived from proving \cref{prop:tbf}, we can now also provide a proof for \cref{prop:cs}.

\begin{proof}[Proof of \cref{prop:cs}]
    \label{pr:cs}
    ~\begin{enumerate}
        \item[1)] \Cref{fig:cv_loops} provides an abstract illustration of $\mr{C}$-$\mr{V}$ loops, where any number (including zero) of the elements depicted in gray may form a loop in any order with the controlled voltage source highlighted in black. As \cref{as:cs}.1 forbids such loops, the same reasoning that lead to \cref{eq:bfp} now yields
        \begin{align*}
            \AT[V_c] \Q[V_s C] = (\A[V_c]')^\T,
        \end{align*}
        since contracting $\A[V_c]$ by the branches in $\A[V_s]$ and $\A[C]$ cannot lead to zero rows, as there must always be some element of a different type in any path connecting the terminals of a controlled voltage source. Furthermore, \cref{as:cs}.1 also implies that the contracted graph $\A[V_c]'$ cannot contain a loop made up of only controlled voltage sources either. Thus, each connected component is a spanning tree and hence $(\A[V_c]')^\T$ has full row rank by \cref{lem:imst}.

        \item[2--4)] We consider cases 2 through 4 together, as the corresponding arguments all follow the same structure. \Cref{fig:cv_only_paths} shows an illustration of the paths required according to \cref{as:cs}, with case 2 requiring a $\mr{V_s}$-only path, while case 3 also allows for capacitors and case 4 further allows for controlled voltage sources. In each case, we note that the existence of the specific paths implies that contracting the $\AIc[i]$, $1 \leq i \leq 3$ by the element types present in the respective paths, leads to the contracted graphs $\AIc[i]'$ consisting only of self-loops. This proves the claims.

        \item[5)] \Cref{fig:li_cutsets} provides an abstract illustration of $\mr{L}$-$\mr{I}$ cutsets, where again any number (including zero) of the elements depicted in gray may form a loop in any order with the controlled current source highlighted in black. The important difference to case 1 is that there cannot be a path containing other element types between the terminals of the highlighted controlled current source, for the depicted subcircuit to form an $\mr{L}$-$\mr{I}$ cutset. As \cref{as:cs}.5 forbids such cutsets, it implies that there exists a $\mr{C}$-$\mr{R}$-$\mr{V}$-only path between the terminals of any controlled current source in $\AIc[4]$. The statement then follows from the same argument as for cases 2 through 4 earlier.
    \end{enumerate}
    \vspace{-\baselineskip}
\end{proof}

Before we move on to some examples, we briefly note (without proof) that the decoupling result from \cref{thm:dec} can also be transferred to flux-charge MNA, a variant of MNA that is often used in practice due to its favorable numerical properties \cite[Section I.4]{gunther2005}.

\begin{theorem}[Decoupling of flux-charge MNA]
    Under the assumptions of \cref{thm:dec}, one can derive an analogous decoupling for the flux-charge formulation of MNA, compare also \cite[Theorem 4.2]{estevez2000} for a corresponding index result.
\end{theorem}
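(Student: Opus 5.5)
The plan is to mirror the proof of \cref{thm:dec} step by step, working with the flux-charge formulation instead of \cref{eq:mnatd}. In that formulation the charges $\mb{q}$ and fluxes $\boldsymbol{\phi}$ appear as additional unknowns: \cref{eq:mnatd.1} becomes $\A[C] \dd{t} \mb{q} + \dotsb = \mb{0}$ together with the constitutive relation $\mb{q} - \qC(\AT[C] \vphi) = \mb{0}$, and \cref{eq:mnatd.2} becomes $\dd{t} \boldsymbol{\phi} - \AT[L] \vphi = \mb{0}$ together with $\boldsymbol{\phi} - \phiL(\iL) = \mb{0}$. The crucial observation is that the topological part of the system, and with it all basis matrices, is unchanged. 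I would therefore reuse exactly the splittings
\[
\vphi = \mb{P}_\mr{V_s} \vphit[V_s] + \Q[V_s] \big( \mb{P}_\mr{C} \vphit[C] + \Q[C] ( \mb{P}_\mr{V_c} \vphit[V_c] + \Q[V_c] ( \mb{P}_\mr{R} \vphit[R] + \Q[R] \vphib[R] ) ) \big)
\]
and $\iL = \V[L] \iLt + \W[L] \iLb$, together with the same left multiplications by $\PT[V_s], \QT[V_s], \PT[C], \QT[C]$, and so on. The algebraic equations $g_1, \dotsc, g_5$ and the output equations for $\iVs$ and $\vphib[R]$ then come out verbatim. The only differences are that time derivatives of $\qC$ are replaced by $\dd{t}\mb{q}$, and \cref{prop:cs} removes controlled-source dependencies at exactly the same stages. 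Their solvability therefore follows from \cref{prop:wp}, \cref{prop:cs} and \cref{lem:ap} exactly as before.

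The new work lies in the differential part. I would split the charge and flux variables so that the differential equations are explicit in a reduced set of charge and flux coordinates. For the charges, set $\mb{x}_q := \PT[C] \QT[V_s] \A[C] \mb{q}$. Then \cref{eq:mnatd.d1} reads $\dd{t} \mb{x}_q = -\PT[C] \QT[V_s](\dotsb)$. For the fluxes, set $\mb{x}_\phi := \WT[L] \boldsymbol{\phi}$, so that \cref{eq:mnatd.d2} becomes $\dd{t} \mb{x}_\phi = \WT[L] \AT[L] \vphi$. The complementary equations, $\VT[L]\big(\dd{t}\boldsymbol{\phi} - \AT[L]\vphi\big) = \mb{0}$ and the part of the charge balance multiplied by $\QT[C]\QT[V_s]$, do not contain derivatives of these new differential variables and play the same role as before.

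Next I would treat the constitutive relations as additional algebraic equations that recover $\vphit[C]$ and $\iLb$ from $\mb{x}_q$ and $\mb{x}_\phi$:
\[
\mb{x}_q - \PT[C]\QT[V_s]\A[C]\qC(\AT[C]\vphi) = \mb{0}, \qquad \mb{x}_\phi - \WT[L]\phiL(\iL) = \mb{0}.
\]
Their Jacobians with respect to $\vphit[C]$ and $\iLb$ are $-\PT[C]\QT[V_s]\A[C]\mb{C}\AT[C]\Q[V_s]\mb{P}_\mr{C}$ and $-\WT[L]\mb{L}\W[L]$. These are exactly the matrices shown to be positive definite in the proof of \cref{thm:dec}, via \cref{lem:ap}.1, \cref{lem:ap}.4 and \cref{as:ej}. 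The remaining components of $\mb{q}$ and $\boldsymbol{\phi}$ (those complementary to the chosen coordinates) are given explicitly by the constitutive relations and can be classed with the output variables, or as trivially solvable algebraic variables.

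I would then assemble the extended algebraic Jacobian in block lower-triangular form, ordering the unknowns as
\[
\vphit[V_s],\ \iLt,\ \vphit[C],\ \iLb,\ \vphit[V_c],\ \vphit[R],\ \iVc .
\]
This ordering works because $\vphit[V_s]$ and $\iLt$ depend only on $t$, while $\vc$ and $\gR$ depend on $\vphit[C]$ and $\iLb$ but not conversely. \Cref{lem:ap}.5 then gives regularity of the whole Jacobian.

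I expect the main obstacle to be verifying that the charge-recovery equation really is independent of the later variables $\vphit[V_c]$, $\vphit[R]$ and $\vphib[R]$, which is what keeps the Jacobian triangular. To check this I would use $\AT[C]\Q[V_s C] = \mb{0}$, which shows that $\qC(\AT[C]\vphi)$ depends only on $\vphit[V_s]$ and $\vphit[C]$. For the fluxes, I need $\phiL$ to depend only on $\iL$ and not on node potentials, which holds by definition of the inductor model.
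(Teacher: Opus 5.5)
The paper states this theorem explicitly without proof ("we briefly note (without proof)"), so the only comparison available is with its implied suggestion to mirror \cref{thm:dec}. Your proposal does exactly that, and the argument is correct. In particular, your triangular ordering is valid. Neither recovery equation involves $\vphit[V_c]$, $\vphit[R]$ or $\iVc$, because $\AT[C]\Q[V_s C] = \mb{0}$ and $\phiL$ depends only on $\iL$. Moreover, $\WT[L]\AT[L]\Q[V_s C V_c R] = \mb{0}$ and $\QT[V_s]\A[V_s] = \mb{0}$ keep the output variables out of $\dd{t}\mb{x}_\phi$ and $\dd{t}\mb{x}_q$.

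Three points should be made explicit when you write it up.

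\emph{The change of variables.} $\mb{x}_q$ and $\mb{x}_\phi$ must be completed to a regular change of variables. The matrix $M \coloneqq \PT[C]\QT[V_s]\A[C]$ has full row rank, because $\AT[C]\Q[V_s]\mb{P}_\mr{C}$ has full column rank by \cref{lem:ap}.1. So you can choose $N$ with $[M^\T, N^\T]$ regular and use $N\mb{q}$ as the complementary charge coordinate. For the fluxes, $\VT[L]\boldsymbol{\phi}$ works directly, since $[\V[L], \W[L]]$ is regular. The projected constitutive relations $N\mb{q} - N\qC(\AT[C]\vphi) = \mb{0}$ and $\VT[L]\boldsymbol{\phi} - \VT[L]\phiL(\iL) = \mb{0}$ then have identity Jacobian blocks in their own variables. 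They can be appended at the end of your ordering without disturbing triangularity.

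\emph{Classify the complements as algebraic.} You offer two options for these complementary components; use the algebraic classification, not the output one. The output equations $\PT[V_s](\A[C]\dd{t}\mb{q} + \dotsb) + \PT[V_s]\A[V_s]\iVs = \mb{0}$ and $\VT[L](\dd{t}\boldsymbol{\phi} - \AT[L]\vphi) = \mb{0}$ contain time derivatives of these components. The paper's output system $\mb{h}$, however, only admits derivatives of $\mb{x}$ and $\mb{y}$.

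\emph{A side benefit.} In your formulation the leading matrix of the differential part is the identity. Also, the source derivatives $\dd{t}\vphit[V_s]$ and $\dd{t}\iLt$ that appear in \cref{eq:mnatdd} no longer enter $\mb{f}$.
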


\section{Examples}
\label{sec:ex}
We now present a number of examples illustrating the decoupling. The examples show that the assumptions of the decoupling are fulfilled by many common subcircuits and that in case \cref{as:cs} is violated, minor changes to the circuit may suffice to still make the decoupling possible. Furthermore, the assumptions can serve as a guide to what changes should be made. This guiding feature can be attributed to the locality of the topological conditions, compare also \cite{estevez2000}.

\begin{example}[Buck converter]
    {\normalfont We begin with a buck converter, an example from power electronics, see e.g.\ \cite{erickson2020} for further background. \Cref{fig:buck} shows the entire circuit at the top and the intermediate stages of the decoupling below it. The expressions next to the arrows always indicate the respective subgraphs highlighted in black directly below, compare also the topological interpretation of \cref{eq:bfp}. Grayed out parts do not appear in the respective subgraphs, while red parts indicate branches that are contracted away but still contribute zero rows, see again \cref{eq:bfp}. We also note that voltages across elements can be written as $\mb{v} = \AT\! \vphi$ in MNA, as it is common to express nonlinear dependencies in terms of voltages rather than nodal potentials in practice. Aside from the voltage controlled resistor $g_\mr{S}(v_\mr{R})$, we assume the diode to be modeled by a nonlinear resistor $g_\mr{D}(v_\mr{D})$.

    \begin{figure}[t]
        \begin{center}
            \includegraphics[width=0.47\textwidth]{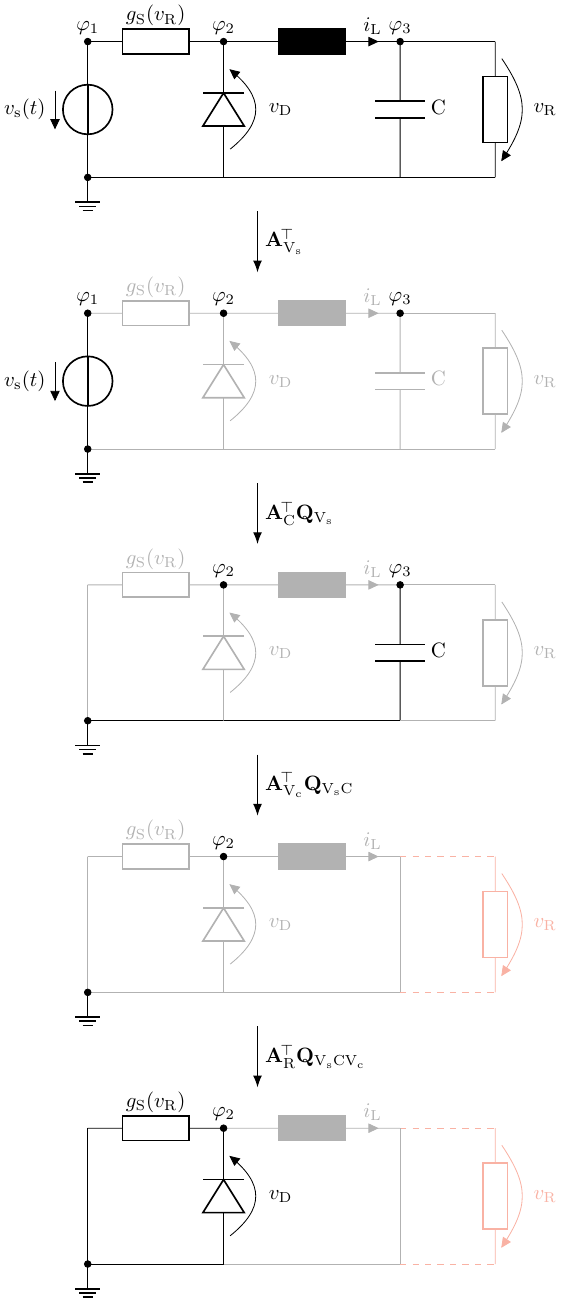}
        \end{center}
        \caption{Buck converter with simple voltage control, modeled by a voltage controlled resistor $g_\mr{S}(v_\mr{R})$.}
        \label{fig:buck}
    \end{figure}

    \begin{figure*}[b]
        \begin{center}
            \includegraphics[width=\textwidth]{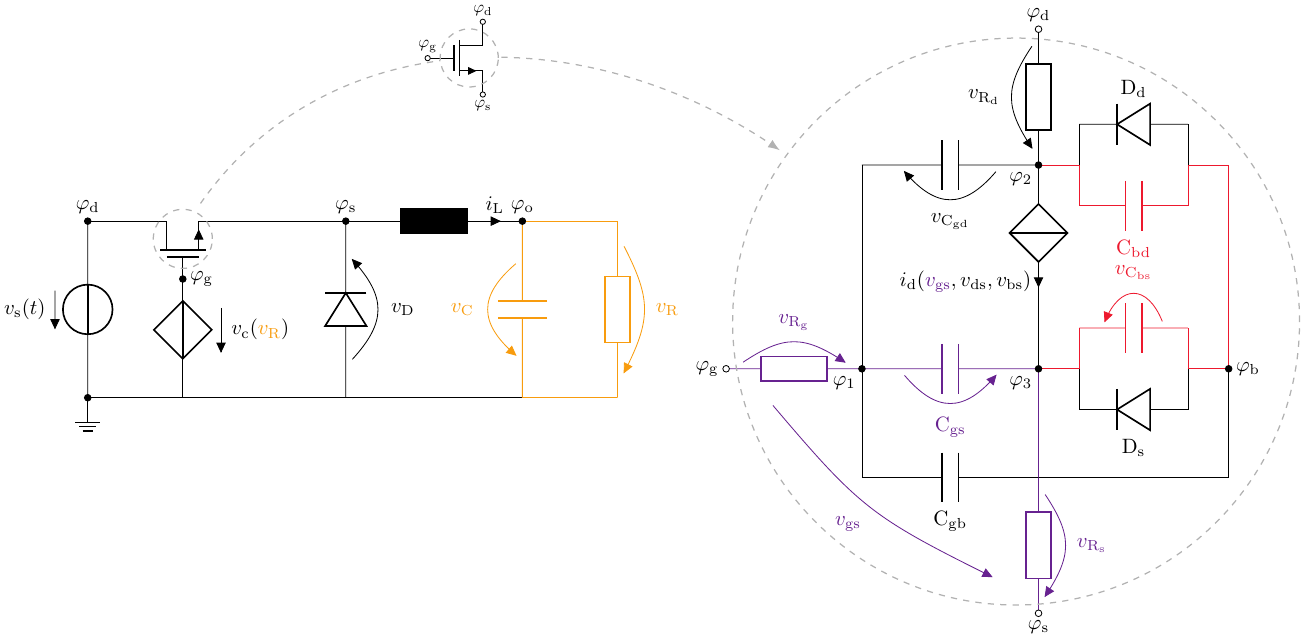}
        \end{center}
        \caption{Buck converter with a MOSFET model and a controlled voltage source replacing the voltage controlled resistor from \cref{fig:buck}. The subcircuit on the right shows the internals of the MOSFET model.}
        \label{fig:buck_mosfet}
    \end{figure*}

    In order to compare the topological approach with the algebraic perspective, we also walk through the decoupling in terms of matrices. For this, we first collect the incidence matrix of the circuit, ordered by element types
    \begin{align*}
        \A &= \begin{bNiceArray}[first-row, last-col]{c|c|ccc|c}
            \mr{C} & \mr{L} & g_\mr{s} & g_\mr{D} & \mr{R} & \mr{V_s} & \\
            & & +1 & & & +1 & \varphi_1\\
            & +1 & -1 & -1 & & & \varphi_2\\
            +1 & -1 & & & +1 & & \varphi_3
        \end{bNiceArray}.
    \end{align*}
    With this, we can now successively write down the basis matrices as derived from the topologies of the subgraphs highlighted in \cref{fig:buck}, while simultaneously verifying that they indeed fulfill \cref{def:bf}. For the first set of basis matrices corresponding to the second graph in \cref{fig:buck}, we find
    \begin{align*}
        \Q[V_s] = \begin{bNiceArray}[last-col]{cc}
            0 & & \varphi_1\\
            1 & & \varphi_2\\
            & 1 & \varphi_3
        \end{bNiceArray}, \quad \mb{P}_\mr{V_s} = \begin{bmatrix}
            1\\
            0\\
            0
        \end{bmatrix},
    \end{align*}
    which leads to
    \begin{align*}
        \vphib[V_s] = \begin{bmatrix}
            \varphi_2\\
            \varphi_3
        \end{bmatrix}, \quad \vphit[V_s] = [\varphi_1]
    \end{align*}
    as the initial variable splitting. Note that there is no need to follow the node ordering of \cref{eq:QE} in general, which is why we explicitly state the ordering of $\Q[V_s]$. Moving on to the third graph of \cref{fig:buck}, we find
    \begin{align*}
        \Q[C] = \begin{bmatrix}
            1\\
            0
        \end{bmatrix}, \quad \mb{P}_\mr{C} = \begin{bmatrix}
            0\\
            1
        \end{bmatrix}
    \end{align*}
    for the basis matrices and
    \begin{align*}
        \vphib[C] = [\varphi_2], \quad \vphit[C] = [\varphi_3]
    \end{align*}
    for the variable splitting. We note that at this point, the resistor $\mr{R}$ is contracted away alongside the capacitor $\mr{C}$, however it may still contribute a zero row, as mentioned earlier.

    The subsequent step also illustrates what happens if there are no elements of a particular type. In this case, the graph simply remains the same and thus all variables become algebraic variables of that type
    \begin{alignat*}{2}
        \Q[V_c] &= 1, &\quad \mb{P}_\mr{V_c} &= 0\\
        \vphib[V_c] &= [\varphi_2], &\quad \vphit[V_c] &= [].
    \end{alignat*}
    Finally, we complete the variable splitting for $\vphi$ with
    \begin{alignat*}{2}
        \Q[R] &= [0], &\quad \mb{P}_\mr{R} &= [1]\\
        \vphib[R] &= [], &\quad \vphit[R] &= [\varphi_2],
    \end{alignat*}
    compare also the last graph in \cref{fig:buck}.

    The same graph also tells us that the inductor $\mr{L}$ is contracted away alongside the remaining resistive elements, implying that the entire graph is contracted into the ground node. This leaves us with a single zero row in $\QT[V_s C V_c R] \A[L]$, such that the inductor currents are split as follows
    \begin{alignat*}{2}
        \W[L] &= [1], &\quad \V[L] &= [0]\\
        \iLb &= [i_\mr{L}], &\quad \iLt &= [].
    \end{alignat*}
    This yields the following system for the differential part
    \begin{align*}
        C \dd{t} \varphi_3 + G \varphi_3 - i_\mr{L} &= 0\\
        L \dd{t} i_\mr{L} - (\varphi_2 - \varphi_3) &= 0,
    \end{align*}
    where $G = 1/R$, while the algebraic part is given by
    \begin{align}
        \begin{split}
            \varphi_1 - v_\mr{s}(t) &= 0\\
            -g_\mr{S}(\varphi_3) (\varphi_1 - \varphi_2) - g_\mr{D}(-\varphi_2) (-\varphi_2) + i_\mr{L} &= 0.
        \end{split} \label{eq:bucka}
    \end{align}
    Lastly, we note that the current through the voltage source is an output variable and that it can be determined using
    \begin{align}
         g_\mr{S}(\varphi_3) (\varphi_1 - \varphi_2) + i_\mr{V_s} &= 0. \label{eq:bucko}
    \end{align}

    The example also serves to illustrate how the decoupling may be used for generating consistent initial conditions, as mentioned in \cref{sec:int}. In order to achieve this, one first determines sets of differential and algebraic variables according to \cref{cor:ts}. Afterwards, the initial conditions of the differential variables can be chosen freely, while the algebraic and output variables are obtained by solving \cref{eq:mnatda}, \cref{eq:mnatd.a2,eq:mnatd.a7}. For the buck converter from \cref{fig:buck}, this amounts to identifying $\varphi_3$ and $i_\mr{L}$ as differential variables and choosing arbitrary values for these, before solving \cref{eq:bucka,eq:bucko} to finally obtain consistent initial conditions.}
\end{example}

\begin{example}[MOSFET]
    {\normalfont For the second example, we again consider a buck converter, however this time we replace the controlled resistor from \cref{fig:buck} by a MOSFET together with a controlled voltage source. The new circuit can be seen on the left of \cref{fig:buck_mosfet}, while the right shows the (rotated) MOSFET model, compare also \cite[Fig.\ 3.37]{tietze2008} or \cite[Fig.\ 2]{estevez2000}. As this circuit already leads to large system matrices, we will only verify that it can be decoupled by looking at \cref{as:cs} and the dependencies for controlled sources listed in \cref{eq:cs,eq:mnatd}. Nonetheless, the resulting matrices are available at \cite{forster2026c}.

    We begin with the controlled current source of the MOSFET model on the right, as it represents the more interesting and instructive case. Comparing \cref{as:cs}.3 with the subcircuit, we note that there is a $\mr{C}$-only path between the terminals of the controlled current source, as marked in red in \cref{fig:buck_mosfet}. Moving on to \cref{eq:cs}, we note that the controlling voltages must be contained in $\AT[C R V] \vphi$ and observe that they can indeed all be represented using only voltages across capacitors and resistors. For $v_\mr{gs}$, this correspondence is highlighted in purple in \cref{fig:buck_mosfet} and it can be expressed as
    \begin{align*}
        v_\mr{gs} &= \varphi_\mr{G} - \varphi_\mr{S} = (\varphi_\mr{g} - \varphi_1) + (\varphi_1 - \varphi_3) + (\varphi_3 - \varphi_\mr{s})\\
        &= v_\mr{R_g} + v_\mr{C_{gs}} + v_\mr{R_s}.
    \end{align*}
    We emphasize that this kind of equivalence can be guaranteed topologically, in this specific case e.g.\ by finding a $\mr{C}$-$\mr{R}$-$\mr{V}$-only path between the nodes defining the controlling voltage. As such, this step can be efficiently automated. For completeness, we also provide similar expressions for the other controlling voltages $v_{ds}$ and $v_{bs}$
    \begin{align*}
        v_\mr{ds} &= v_\mr{R_d} + v_\mr{C_{gd}} + v_\mr{C_{gs}} + v_\mr{R_s}\\
        v_\mr{bs} &= v_\mr{C_{bs}} + v_\mr{R_s}.
    \end{align*}
    In total, this shows that the MOSFET model on the right will always lead to a circuit that can be decoupled, so long as there are no other controlled sources present that violate \cref{as:cs} or the dependencies from \cref{eq:cs} or \cref{eq:mnatd}. In particular, this means that the buck converter on the left of \cref{fig:buck_mosfet} can be decoupled, as the remaining controlled voltage source fulfills \cref{as:cs}.1 and its controlling voltage can be expressed as $v_\mr{R} = v_\mr{C}$, which can be represented using $\AT[C V_s] \vphi$ as marked in orange in \cref{fig:buck_mosfet}.}
\end{example}

\begin{example}[OPAMP]
    {\normalfont As a further example, we consider an idealized OPAMP model as shown in \cref{fig:opamp}. Commonly, one assumes that the input impedance $\mr{Z_i}$ is given by a very large resistance, while the output impedance $\mr{Z_o}$ corresponds to a very small resistance. (In the limit case, the resistances go to infinity and zero, respectively.) However it is also possible to consider an additional reactive part, see e.g.\ \cite[Fig.\ 2]{hanson2025}.}

    \begin{figure}[b]
        \begin{center}
            \includegraphics[width=0.41\textwidth]{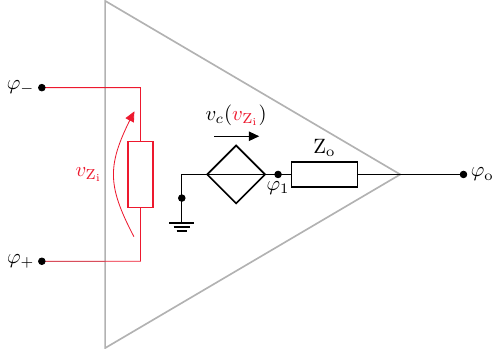}
        \end{center}
        \caption{Idealized OPAMP model.}
        \label{fig:opamp}
    \end{figure}

    {\normalfont Looking at the controlled voltage source in the center of \cref{fig:opamp}, we see that it depends on the voltage $v_\mr{Z_i}$ across the input impedance, as marked in red. In terms of the decoupling, it is therefore necessary to e.g.\ consider the input impedance to be made up of a capacitor in parallel with a resistor, such that the voltage $v_\mr{Z_i}$ may be represented using only $\AT[C V_s] \vphi$, as explained earlier. In practice, OPAMP models are often much more complex and built up from multiple MOSFETs, a common model of which we already investigated in the previous example.}
\end{example}

\begin{figure*}
    \begin{center}
        \includegraphics[width=0.6\textwidth]{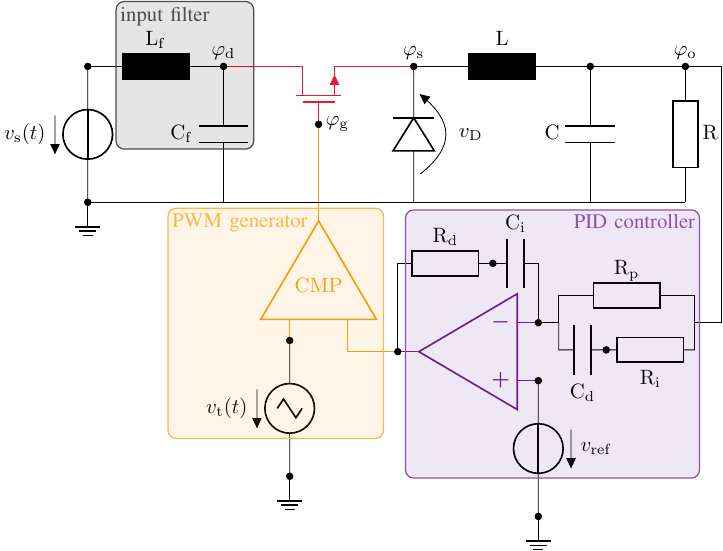}
    \end{center}
    \caption{Voltage controlled switched-mode power supply model based on the buck converter from \cref{fig:buck_mosfet}.}
    \label{fig:smps}
\end{figure*}

\begin{example}[SMPS]
    {\normalfont As a final example, we extend the buck converter from \cref{fig:buck_mosfet} to a more accurate model of a voltage controlled switched-mode power supply (SMPS), as shown in \cref{fig:smps}. The model consists of four distinct parts. An input filter highlighted in gray, a PID controller based on the output voltage highlighted in purple, a PWM generator highlighted in orange\footnote{In practice, the PWM generator is followed by a gate driver for the MOSFET, which we leave out for simplicity.} and the actual converter, in this case a buck converter. From the previous two examples, we already know that neither the MOSFET marked in red, nor the OPAMP highlighted in purple present a hindrance to the decoupling. Taking into account that the comparator (CMP), marked in orange, can also be modeled using the OPAMP circuit from \cref{fig:opamp}, we therefore see that also the entire SMPS circuit from \cref{fig:smps} may be decoupled. This further emphasizes that arbitrarily complex systems can be decoupled, so long as their constituent parts do not violate any of the assumptions. Lastly, we remark that the corresponding matrices can again be found at \cite{forster2026c}.}
\end{example}

\section{Conclusion}
\label{sec:con}
We introduced a topological decoupling for the system of MNA including controlled sources, that is valid under \cref{as:cs} and allows for the dependencies of controlled sources specified in \cref{eq:cs,eq:mnatd}. We showed that the decoupling preserves the structure of MNA in terms of sparsity and positive-definiteness and that, due to the constructive nature of the proof, the decoupling can be efficiently implemented. Using examples, we also demonstrated that the assumptions are general enough, similar to those of \cite[Theorem 4.1]{estevez2000}, such that most common circuits can be decoupled, possibly after some minor modifications.

From an applied perspective, the decoupling lends itself to various applications in the contexts of model order reduction and machine learning, however it may also find use in computing consistent initial conditions or speeding up conventional simulations. From a more theoretical perspective, our analysis did not consider controlled sources depending on time derivatives of solution variables or terms including time delays. Both phenomena appear in practice and have already been investigated with regards to their mathematical structure, see e.g.\ \cite{cortes_garcia2020} in the context of field-circuit coupling or \cite{trenn2019} in systems control. Thus, it may be of interest to further extend the decoupling in these directions. Finally, as remarked in \cref{sec:pre}, we assumed the topology of the circuit to be fixed. As ideal switches frequently occur in models of, e.g., power electronic systems however, it may also be interesting to extend the decoupling in this direction using ideas related to \cite{riaza2024a, riaza2024b}.

\section*{Acknowledgments}
This work is supported by the Graduate School CE within the Centre for Computational Engineering at Technische Universität Darmstadt and the ECSEL Joint Undertaking (JU) under grant agreement No. 101007319. The JU receives support from the European Union's Horizon 2020 research and innovation programme and the Netherlands, Hungary, France, Poland, Austria, Germany, Italy and Switzerland. Note that this work only reflects the authors' views and that the JU is not responsible for any use that may be made of the information it contains.

\bibliographystyle{IEEEtran}
\bibliography{biblio.bib}

\appendix
\label[appendix]{sec:app}
\begin{proof}[Proof of \cref{lem:ap}]
    \label{pr:ap}
    ~\begin{enumerate}
        \item Let $\mb{x} \neq \mb{0}$, then $\mb{y} \coloneqq \mb{P} \mb{x} \neq \mb{0}$, since $\mb{P}$ has full column rank as it is part of a basis. Furthermore, we have $\mb{y} \in \im \mb{P}$ and since $\im \mb{P} \cap \ker \AT = \{ \mb{0} \}$ by \cref{def:bf}, it follows that $\AT \mb{y} = \AT \mb{P} \mb{x} \neq \mb{0}$. If $\AT \in \mathbb{R}^{b \times n}$ has full row rank, we further find $\dim(\ker \AT) = n - b$, which implies $\mb{P} \in \mathbb{R}^{n \times b}$ by \cref{def:bf} and hence $\AT \mb{P} \in \mathbb{R}^{b \times b}$.

        \item This can be proved analogous to 1), by recognizing that $\V$ plays the same role for $\A$ as $\mb{P}$ does for $\AT$.

        \item By the definition of the basis matrices we have
        \begin{align}
            \begin{bmatrix}
                \mb{A}_1^\T\\
                \mb{A}_2^\T\\
                \vdots\\
                \mb{A}_n^\T
            \end{bmatrix} \mb{Q}_1 \cdots \mb{Q}_{n-1} &= \begin{bmatrix}
                \mb{0}\\
                \mb{A}_2^\T \mb{Q}_1\\
                \vdots\\
                \mb{A}_n^\T \mb{Q}_1
            \end{bmatrix} \mb{Q}_2 \cdots \mb{Q}_{n-1} \notag\\
            &\hspace{0.5em} \vdots \notag\\
            &= \begin{bmatrix}
                \mb{0}\\
                \vdots\\
                \mb{0}\\
                \mb{A}_n^\T \mb{Q}_1 \cdots \mb{Q}_{n-1}
            \end{bmatrix}. \label{eq:ap}
        \end{align}
        Now let $\mb{x}_{n-1} \neq \mb{0}$, then it holds that $\mb{x}_{k-1} \coloneqq \mb{Q}_k \mb{x}_k \neq \mb{0}$ for $1 \leq k \leq n-1$, since all $\mb{Q}_k$ have full column rank. This gives
        \begin{align*}
            \begin{bmatrix}
                \mb{A}_1^\T\\
                \mb{A}_2^\T\\
                \vdots\\
                \mb{A}_n^\T
            \end{bmatrix} \mb{Q}_1 \cdots \mb{Q}_{n-1} \mb{x}_{n-1} = \begin{bmatrix}
                \mb{A}_1^\T\\
                \mb{A}_2^\T\\
                \vdots\\
                \mb{A}_n^\T
            \end{bmatrix} \mb{x}_0 \neq \mb{0},
        \end{align*}
        as $[\mb{A}_1, \mb{A}_2, \dotsc, \mb{A}_{n-1}]^\T$ has full column rank by assumption. But then $\mb{A}_n^\T \mb{Q}_1 \cdots \mb{Q}_{n-1}$ must also have full column rank by comparing with \cref{eq:ap}.

        \item This follows e.g.\ from \cite{forster2023c}[Remark 1] or \cite{tischendorf1999}[Lemma 2.2].

        \item For $n=2$, the claim follows from
        \begin{align*}
            \det(\mb{A}) = \det \begin{bmatrix}
                \mb{A}_1 & \mb{0}\\
                \bullet & \mb{A}_2
            \end{bmatrix} = \det(\mb{A}_1) \det(\mb{A}_2).
        \end{align*}
        For arbitrary $n \in \mathbb{N}$, it can be proved using induction by rewriting $\mb{A}$ in the following way
        \begin{align*}
            \mb{A} &= \begin{bmatrix}
                \mb{A}_1 & \mb{0} & \cdots & \mb{0}\\
                \bullet & \mb{A}_2 & \ddots & \vdots\\
                \vdots & \ddots & \ddots & \mb{0}\\
                \bullet & \cdots & \bullet & \mb{A}_n
            \end{bmatrix} = \begin{bmatrix}
                \mb{A}' & \mb{0}\\
                \bullet & \mb{A}_n
            \end{bmatrix},
        \end{align*}
        where
        \begin{align*}
            \mb{A}' &= \begin{bmatrix}
                \mb{A}_1 & \mb{0} & \cdots & \mb{0}\\
                \bullet & \mb{A}_2 & \ddots & \vdots\\
                \vdots & \ddots & \ddots & \mb{0}\\
                \bullet & \cdots & \bullet & \mb{A}_{n-1}
            \end{bmatrix}.
        \end{align*}
    \end{enumerate}
\end{proof}

\begin{IEEEbiography}[{\includegraphics[width=1in,height=1.25in,clip,keepaspectratio]{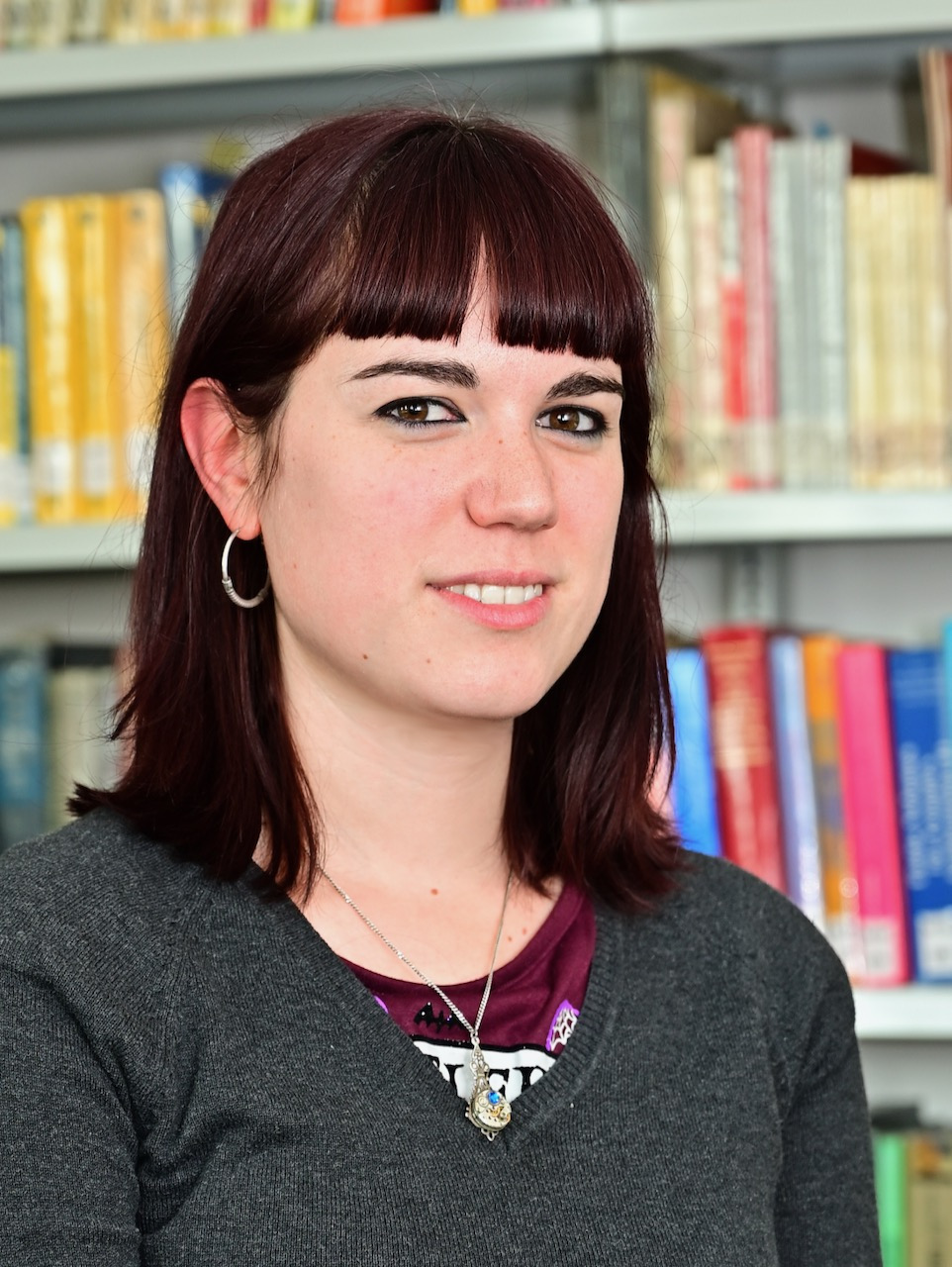}}]{Idoia Cortes Garcia}
    received her Bachelor in mathematics and Master in modelling for science and engineering from the Autonomous University of Barcelona. She obtained her Ph.D.\ in electrical engineering from the Technical University of Darmstadt in 2020. Since 2022 she is an assistant professor at the group of Dynamics and Control from the department of mechanical engineering at Eindhoven University of Technology. Her research interests include coupled multiphysical dynamical systems, differential algebraic equations, efficient time domain (co-)simulation methods and hybrid modelling approaches.
\end{IEEEbiography}
\vspace{-1ex}
\begin{IEEEbiography}[{\includegraphics[width=1in,height=1.25in,clip,keepaspectratio]{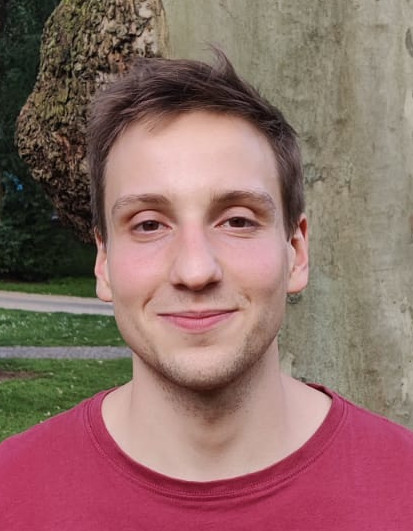}}]{Peter F. Förster}
    received his B.Sc.\ and M.Sc.\ degrees in electrical engineering from Technical University of Darmstadt. Currently, he is a doctoral researcher at the centre for analysis, scientific computing and applications at Eindhoven University of Technology and in the computational electromagnetics group at Technical University of Darmstadt. His research interests include circuit simulation, differential-algebraic equations, machine learning and digital twins.
\end{IEEEbiography}
\vspace{-1ex}
\begin{IEEEbiography}[{\includegraphics[width=1in,height=1.25in,clip,keepaspectratio]{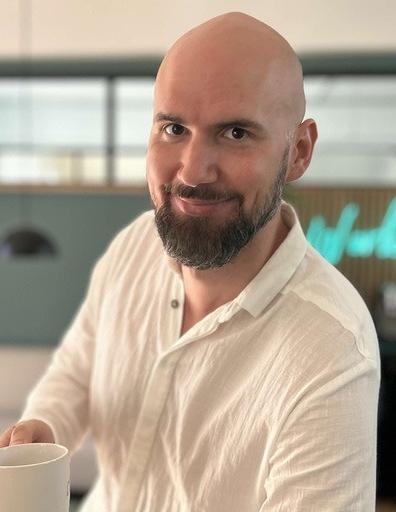}}]{Lennart Jansen}
    received his M.Sc.\ degree in mathematics from Universität zu Köln and his Ph.D.\ degree in mathematics from Universität zu Berlin. Afterwards, he held a post-doc position at Heinrich-Heine-Universität in Düsseldorf before starting work in the industry. One of the main research topics he worked on in the industry was AI-based recommendations for design improvement in the automobile industry. He switched to the photovoltaic industry in 2021 and currently works there on optimal control for combined electric-water-gas networks.
\end{IEEEbiography}
\vspace{-1ex}
\begin{IEEEbiography}[{\includegraphics[width=1in,height=1.25in,clip,keepaspectratio]{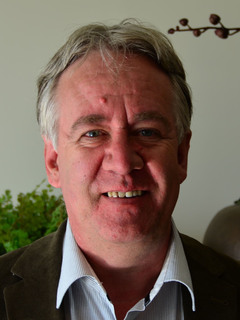}}]{Wil Schilders}
    received the M.Sc.\ degree in mathematics from Radboud University Nijmegen, Nijmegen, The Netherlands, in 1978, and the Ph.D.\ degree from the Trinity College Dublin, Dublin, Ireland, in 1980. He has been working in the electronics industry with Philips Research Laboratories, Eindhoven, The Netherlands, since 1980, and NXP, since 2006, where he developed algorithms for simulating semiconductor devices, electronic circuits, organic light emitting diodes, and electromagnetic problems (TV tubes, interconnects, and magnetic resonance imaging). Since 1999, he has been a Part-Time Professor of numerical mathematics for industry with the Technical University of Eindhoven, Eindhoven. He is currently the Managing Director of the Platform for Mathematics in The Netherlands. He is also the President of the European Consortium for Mathematics in Industry.
\end{IEEEbiography}
\vspace{-1ex}
\begin{IEEEbiography}[{\includegraphics[width=1in,height=1.25in,clip,keepaspectratio]{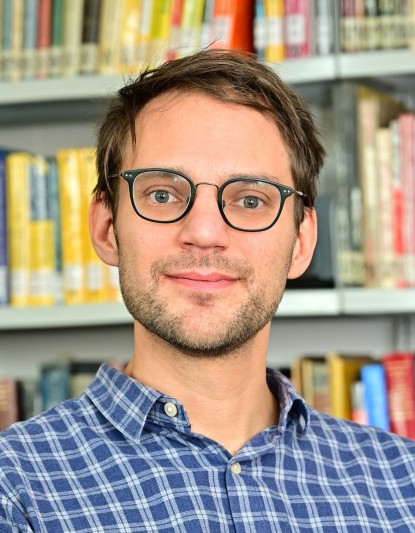}}]{Sebastian Schöps}
    received the M.Sc.\ degree in business mathematics and the joint Ph.D.\ degree from Bergische Universität Wuppertal and Katholieke Universiteit Leuven in mathematics and physics. He was appointed as a Professor of Computational Electromagnetics at Technische Universität Darmstadt within the interdisciplinary center of computational engineering, in 2012. His current research interests include coupled multi-physical problems, bridging computer aided design and simulation, parallel algorithms for high performance computing, digital twins, uncertainty quantification, and machine learning.
\end{IEEEbiography}
\end{document}